\newcommand{\ncom}{\newcommand}
\ncom{\dho}{\partial}
\ncom{\rar}{\rightarrow}
\ncom{\imply}{\Rightarrow}
\ncom{\lrar}{\longrightarrow}
\ncom{\into}{\hookrightarrow}
\ncom{\onto}{\twoheadrightarrow}
\ncom{\ov}{\overline}
\ncom{\m}{\mbox}
\ncom{\sta}{\stackrel}
\ncom{\invlim}{\varprojlim}
\ncom{\xhat}{\widehat}
\ncom{\vspc}{\vspace{3mm}}
\ncom{\End}{{\cE}nd}
\ncom{\tensor}{\otimes}
\ncom{\al}{\alpha}
\ncom{\cHom}{{\mathcal Hom}}
\ncom{\A}{{\mathbb A}}
\ncom{\comx}{{\mathbb C}}
\ncom{\E}{{\mathbb E}}
\ncom{\F}{{\mathbb F}}
\ncom{\G}{{\mathbb G}}
\ncom{\K}{{\mathbb K}}
\ncom{\Le}{{\mathbb L}}
\ncom{\N}{{\mathbb N}}
\ncom{\p}{{\mathbb P}}
\ncom{\Q}{{\mathbb Q}}
\ncom{\R}{{\mathbb R}}
\ncom{\Z}{{\mathbb Z}}
\ncom{\f}{\dfrac}
\ncom{\wtil}{\widetilde}
\ncom{\ci}{{\mathpzc i}}
\ncom{\cA}{{\mathcal A}}
\ncom{\cC}{{\mathcal C}}
\ncom{\cE}{{\mathcal E}}
\ncom{\cF}{{\mathcal F}}
\ncom{\cG}{{\mathcal G}}
\ncom{\cH}{{\mathcal H}}
\ncom{\cI}{{\mathcal I}}
\ncom{\cJ}{{\mathcal J}}
\ncom{\cK}{{\mathcal K}}
\ncom{\cL}{{\mathcal L}}
\ncom{\cM}{{\mathcal M}}
\ncom{\cN}{{\mathcal N}}
\ncom{\cO}{{\mathcal O}}
\ncom{\cP}{{\mathcal P}}
\ncom{\cQ}{{\mathcal Q}}
\ncom{\cR}{{\mathcal R}}
\ncom{\cS}{{\mathcal S}}
\ncom{\cT}{{\mathcal T}}
\ncom{\cU}{{\mathcal U}}
\ncom{\cV}{{\mathcal V}}
\ncom{\cW}{{\mathcal W}}
\ncom{\cX}{{\mathcal X}}
\ncom{\cY}{{\mathcal Y}}
\ncom{\cZ}{{\mathcal Z}}
\ncom{\cSU}{{\mathcal S \mathcal U}}
\ncom{\eop}{{\hfill $\Box$}}
\ncom{\isom}{\cong}
\newcommand\scalemath[2]{\scalebox{#1}{\mbox{\ensuremath{\displaystyle #2}}}}
\theoremstyle{plain}
\newtheorem{theorem}{Theorem}
\newtheorem{lemma}[theorem]{Lemma}
\newtheorem{conj}{Conjecture}
\newtheorem{corollary}[theorem]{Corollary}
\newtheorem{proposition}[theorem]{Proposition}
\theoremstyle{definition}
\newtheorem{defn}{Definition}
\theoremstyle{remark}
\newtheorem{remark}{Remark}
\newtheorem{eg}{Example}
\long\def\comment#1{}
\begin{document}
	
	\title{Some remarks on two-periodic modules over local rings}
	\author[N. Das]{Nilkantha Das}
	
	\address{Stat-Math Unit, Indian Statistical Institute, 203 B.T. Road, Kolkata 700 108, India.}
	\email{dasnilkantha17@gmail.com}
	\author[S. Dey]{Sutapa Dey}
	\address{Department of Mathematics, Indian Institute of Technology -- Hyderabad, 502285, India.}
	\email{ma20resch11002@iith.ac.in}
	
	\subjclass[2020]{13C12;  13D02; 13H10} 
	
	\begin{abstract}
In this note, some properties of finitely generated two-periodic modules  over commutative Noetherian local rings have been studied. We show that under certain assumptions on a pair of modules $\left(M,N \right)$ with $M$ two-periodic, the natural map $M \otimes_R N \to Hom_R(M^*,N)$ is an isomorphism. As a consequence, we have that the Auslander's depth formula holds for such a pair. Celikbas et al. recently showed the Huneke-Wiegand conjecture holds over one-dimensional domain for two-periodic modules. We generalize their result to the case of two-periodic module with rank over any one-dimensional local ring. More generally, under certain assumptions on the modules, we show that a pair of modules over an one-dimensional local ring has non-zero torsion if and only if they are Tor-independent.
	\end{abstract}
	 
	\keywords{Two-periodic modules; Torsion modules; Depth formula}
	\maketitle
	
	\section{Introduction} 
	Throughout, $R$ denotes a commutative Noetherian local ring. All $R$-modules are assumed to be finitely generated. 
	
	The tensor product $M \otimes_RN$ of two modules $M$ and $N$ usually contains a non-zero torsion submodule. The assumption that this tensor product is ``nice"; for instance, torsion-free or reflexive, forces strong conditions on the modules $M$ and $N$. In this note, we aim to study various properties of tensor product of modules, where one of the modules is two-periodic.
	
	Let $M$ be a $R$-module, and ${\bf F}: \ \ \rar F_1 \rar F_0 \rar M \rar 0$ be a minimal free resolution of $M$. Recall that $M$ is said to be \textit{two-periodic} (cf. \cite{Eisen}) if there is a map of complexes $s: {\bf F} \rar {\bf F}$ of degree $-2$ such that $s: F_{i+2} \rar F_i$ is an isomorphism for $i \geq 0$.  Equivalently, $M$ is two-periodic if and only if $M \cong \Omega_R^2 M$, where for any $i \geq 0$, $\Omega^i_R M$ denotes the $i^{\text{th}}$ syzygy of $M$. More generally, we say a module $M$ is eventually two-periodic if the minimal free resolution of $M$ is periodic of period 2 after a certain stage, that is if $\Omega_R^s M $ is two-periodic for some $s \geq 0$. Two-periodic modules are eventually two-periodic, in particular.
	
	Eisenbud in \cite[Theorem 6.1]{Eisen} showed that over a hypersurface local ring, a maximal Cohen-Macaulay module (with no free summands) is two-periodic, and all modules are eventually two-periodic. He also showed that over complete intersection local rings, all modules with bounded Betti numbers are eventually two-periodic (cf. \cite[Theorem 4.1]{Eisen}). Later, Gasharov and Peeva gave certain criteria for when modules over Cohen-Macaulay local rings and arbitrary local rings are eventually two-periodic; see Theorem 1.2 and Proposition 3.8 of \cite{GP}. 
	
	Two-periodic modules have complexity 1 and are of reducible complexity. It has a simpler free resolution of infinite length; thus providing a rich class of modules beyond those with finite projective dimensions.

	As mentioned earlier, our goal is to study $M \otimes_R N$ when at least one of them  is two-periodic. Our first result in this direction is devoted to a better description of  $M \otimes_R N$. Let us denote the dual of a $R$-module $M$ by $M^*$. It is well known that there always exists a natural map 
 	\begin{equation}\label{temp_1}
 	\alpha: M^* \otimes_R N \longrightarrow Hom_R(M,N),
	\end{equation}
	given by $\alpha(f\otimes n)(m)= f(m) \cdot n$ for $f \in M^*$, $m \in M$, and $n \in N$. On the other hand, the natural map $M \longrightarrow M^{**}$ induces the map $$M \otimes_R N \longrightarrow M^{**}\otimes N.$$
	Replacing $M$ by $M^*$ in \cref{temp_1}, and composing both the maps, we get a natural map $$M \otimes N \to Hom_R(M^*,N).$$ This map in general need not be injective or surjective. However, under some special circumstances, the map will be an isomorphism. This gives us a better description of  	 $M \otimes N$.
 \begin{theorem}  \label{thm_1} Let $R$ be a local ring and $M$ be a two-periodic $R$-module with finite Gorenstein dimension. Let $N$ be any $R$-module such that the pair $(M,N)$ is Tor-independent over $R$. Then the natural map $$M \otimes_R N \to Hom_R(M^*,N)$$ is an isomorphism.
	\end{theorem}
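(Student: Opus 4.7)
The plan is to exploit two-periodicity to identify both $M \otimes_R N$ and $Hom_R(M^*, N)$ as the kernel of a common map between free modules.

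First, I observe that two-periodicity together with finite Gorenstein dimension forces $M$ to be totally reflexive. Indeed, $\Omega^n_R M \in \{M, \Omega_R M\}$ for every $n \geq 0$, so choosing $n$ at least the Gorenstein dimension of $M$ shows that at least one of $M, \Omega_R M$ is totally reflexive; since syzygies of totally reflexive modules are totally reflexive, the other is as well. Hence $M \cong M^{**}$, and $\Ext^i_R(M, R) = 0 = \Ext^i_R(M^*, R)$ for all $i \geq 1$.

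Next, I splice the first two syzygy sequences of $M$ and use $\Omega^2_R M \cong M$ to produce the four-term exact sequence
$$0 \to M \xrightarrow{j} F_1 \xrightarrow{d} F_0 \to M \to 0, \qquad (\dagger)$$
with $F_0, F_1$ finitely generated free. Since every term of $(\dagger)$ is totally reflexive, applying $(-)^*$ preserves exactness and yields in particular the free presentation $F_0^* \xrightarrow{d^*} F_1^* \to M^* \to 0$. Applying $Hom_R(-, N)$ to this presentation and using the natural iso $Hom_R(F^*, N) \cong F \otimes_R N$ for finitely generated free $F$, I identify $Hom_R(M^*, N)$ with $\ker(d \otimes 1_N \colon F_1 \otimes_R N \to F_0 \otimes_R N)$. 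On the other hand, tensoring $(\dagger)$ with $N$ and invoking the Tor-independence of $(M, N)$ (which propagates to $(\Omega_R M, N)$ by a standard dimension shift) preserves exactness, giving
$$0 \to M \otimes_R N \xrightarrow{j \otimes 1_N} F_1 \otimes_R N \xrightarrow{d \otimes 1_N} F_0 \otimes_R N \to M \otimes_R N \to 0,$$
so $M \otimes_R N$ is also $\ker(d \otimes 1_N)$.

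Finally, to verify that the abstract identification just produced coincides with the specific natural map in the statement, I invoke naturality of the transformation $P \mapsto \bigl(P \otimes_R N \to Hom_R(P^*, N)\bigr)$ applied to $j \colon M \hookrightarrow F_1$. In the resulting commutative square, the vertical arrow at $F_1$ is an isomorphism, and both horizontal arrows identify $M \otimes_R N$ and $Hom_R(M^*, N)$ with $\ker(d \otimes 1_N)$ inside $F_1 \otimes_R N$; hence the natural map at $M$ is also an isomorphism. The main subtlety lies in this last step: one must check that the two embeddings into $F_1 \otimes_R N$ really are the inclusion of the kernel, after which the naturality square forces the conclusion. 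Everything else is routine once total reflexivity is in hand.
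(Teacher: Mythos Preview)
Your proof is correct and takes a genuinely different, more elementary route than the paper. The paper establishes the result by invoking the Auslander--Bridger exact sequence
\[
0 \to \Ext^1_R(D(M),N) \to M \otimes_R N \to Hom_R(M^*,N) \to \Ext^2_R(D(M),N),
\]
and then spends several preparatory lemmas (on universal pushforwards and the Auslander dual, culminating in the identification $\Omega^1_R M \approx M_1$) to show that both $\Ext$ terms vanish via isomorphisms $\Ext^1_R(D(M),N) \cong Tor_2^R(M,N)$ and $\Ext^2_R(D(M),N) \hookrightarrow Tor_1^R(M,N)$. You bypass all of this machinery: by dualizing the four-term periodic sequence and tensoring it with $N$, you realize both $M\otimes_R N$ and $Hom_R(M^*,N)$ directly as $\ker(d\otimes 1_N)$, and then a naturality square pins down the specific map. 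Your argument is shorter, self-contained, and makes transparent that only total reflexivity, two-periodicity, and the vanishing of $Tor_1$ and $Tor_2$ are needed. The paper's approach, on the other hand, develops lemmas about $D(M)$ and the universal pushforward that are of independent interest and are reused elsewhere in the paper, so it fits into a broader framework even if it is heavier for this particular statement.
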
  
	Let us now turn to an application of \Cref{thm_1}. 
	The above theorem will show that the depth formula holds for such a pair $(M,N)$ of $R$-modules.	A pair $(M,N)$ of $R$-modules is said to satisfy the Auslander's depth formula (cf. \cite{A1}) if $$depth\, M + depth\, N = depth\, R + depth\, M \otimes_R N.$$ The depth formula is known to be true for the Tor-independent pair of modules $(M,N)$ (that is, $Tor_i^R(M,N) = 0$ for $i \geq 1$), where either $M$ or $N$ has finite projective dimension \cite[Theorem 1.2]{A1}. Huneke and Wiegand \cite[Proposition 2.5]{HW} showed that the depth formula holds for the Tor-independent pair $(M,N)$ of $R$-modules over complete intersection rings. The depth formula is shown to hold for Tor-independent pair of modules when one of the modules has finite complete intersection dimension by \cite[Theorem 2.5]{AY}, and independently by Iyenger \cite[Theorem 4.3]{Iyenger}. 		Over Gorenstein rings, it is not known if the depth formula for the Tor-independent pair $(M,N)$ holds or not. 
	In the setup of two-periodic modules, we show the following: 	
	\begin{theorem} \label{thm1_cor}
		Let $R$ be a local ring with positive depth and $M$ be a two-periodic $R$-module with finite Gorenstein dimension. Let $N$ be any $R$-module such that the pair $(M,N)$ is Tor-independent over $R$. Then the pair $(M,N)$ satisfies the depth formula.
	\end{theorem}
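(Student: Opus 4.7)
The plan is to combine \Cref{thm_1} with the Auslander--Bridger formula and a depth calculation for $Hom$, bridged by a Tate-cohomology argument that promotes Tor-independence of $(M,N)$ to Ext-vanishing for $(M^{*},N)$.

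First, I would observe that any nonzero two-periodic module $M$ of finite Gorenstein dimension is automatically totally reflexive: the $2$-periodicity of the minimal free resolution of $M$ forces $Ext^i_R(M,R)$ to be periodic in $i$ with period $2$, while finite Gorenstein dimension forces this sequence to vanish eventually, hence for all $i\geq 1$. Thus the Gorenstein dimension of $M$ is zero, and by the Auslander--Bridger formula $depth\,M=depth\,R$. The depth formula therefore reduces to the single equality $depth\,(M\otimes_R N)=depth\,N$, and by \Cref{thm_1} this is equivalent to $depth\,Hom_R(M^{*},N)=depth\,N$.

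Second, I would upgrade Tor-independence of $(M,N)$ to Ext-vanishing $Ext^i_R(M^{*},N)=0$ for every $i\geq 1$. Since $M$ is totally reflexive and two-periodic, it admits a $2$-periodic complete (Tate) resolution $\mathbb{T}$, and the Tate Tor modules $\widehat{Tor}^R_i(M,N)=H_i(\mathbb{T}\otimes_R N)$ are therefore $2$-periodic in $i$. For $i\geq 1$ they coincide with $Tor^R_i(M,N)=0$, so by periodicity $\widehat{Tor}^R_i(M,N)=0$ for every integer $i$. The natural isomorphism $Hom_R(F,N)\cong F^{*}\otimes_R N$ for finitely generated free $F$, applied termwise, identifies $Hom_R(\mathbb{T},N)$ with $\mathbb{T}^{*}\otimes_R N$; since $\mathbb{T}^{*}$ is a complete resolution of $M^{*}$, passing to (co)homology yields the duality $\widehat{Ext}^i_R(M^{*},N)\cong\widehat{Tor}^R_{-i}(M,N)=0$, and in particular $Ext^i_R(M^{*},N)=0$ for all $i\geq 1$.

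Third, I would prove $depth\,Hom_R(M^{*},N)=depth\,N$ by induction on $depth\,N$. In the base case $depth\,N=0$, since $M^{*}\neq 0$ (because $M^{**}\cong M\neq 0$) I may choose a surjection $M^{*}\twoheadrightarrow k$, and composing with $k\hookrightarrow N$ sending $1$ to a nonzero socle element $n_0\in N$ gives a nonzero $\varphi\in Hom_R(M^{*},N)$ annihilated by $\mathfrak{m}$, so $depth\,Hom_R(M^{*},N)=0$. For the inductive step $depth\,N\geq 1$, choose an $N$-regular element $x\in\mathfrak{m}$; then $x$ is also $Hom_R(M^{*},N)$-regular since $Hom_R(M^{*},(0:_N x))=Hom_R(M^{*},0)=0$. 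Applying $Hom_R(M^{*},-)$ to $0\to N\xrightarrow{x} N\to N/xN\to 0$, and using $Ext^i_R(M^{*},N)=0$, yields the isomorphism $Hom_R(M^{*},N)/x\,Hom_R(M^{*},N)\cong Hom_R(M^{*},N/xN)$ together with $Ext^i_R(M^{*},N/xN)=0$ for every $i\geq 1$; the inductive hypothesis applied to $N/xN$ then closes the argument. The most delicate point will be the second step: one must carefully unpack the Tate duality $\widehat{Ext}^i_R(M^{*},N)\cong\widehat{Tor}^R_{-i}(M,N)$ with the correct shifts and conventions for the dual complete resolution, and verify that $2$-periodicity genuinely propagates the known positive-degree Tor vanishing into the negative Tate range without leaving a residual obstruction.
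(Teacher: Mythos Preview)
Your argument is correct and, after the common reduction to showing $depth\,Hom_R(M^*,N)=depth\,N$ via \Cref{thm_1}, takes a genuinely different path from the paper in the inductive step. The paper argues that once $depth\,N>0$ one has $depth\,Hom_R(M^*,N)>0$ automatically, so $R,M,N,M\otimes_R N$ share a regular element; it then passes to the quotient ring and inducts on $\dim R$, quoting \cite{BH}, \cite{Gdim}, and \cite{ST} to check that two-periodicity, finite Gorenstein dimension, and Tor-independence survive the reduction. You instead stay over $R$, first establishing $Ext^i_R(M^*,N)=0$ for all $i\ge 1$ by exploiting the $2$-periodic complete resolution of $M$ (so that Tor-vanishing in positive degrees propagates to all Tate degrees and then dualizes to Ext-vanishing for $M^*$), and then run a clean induction on $depth\,N$ using only an $N$-regular element. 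Your route avoids the external preservation lemmas and yields the Ext-vanishing $Ext^i_R(M^*,N)=0$ as a byproduct of independent interest; the paper's route is lighter on machinery (no Tate cohomology) but leans on several citations. Your flagged ``delicate point'' is fine: since $M$ is two-periodic and totally reflexive, the sequences \eqref{seq_A} and \eqref{seq_B} splice into a strictly $2$-periodic acyclic complex $\mathbb{T}$ of free modules whose dual is again acyclic, so the identification $Hom_R(\mathbb{T}^*,N)\cong \mathbb{T}\otimes_R N$ is termwise and the shift is exactly as you wrote.
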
 
	It is worthwhile to note that a two-periodic module has reducible complexity (for more details about reducible complexity, we refer \cite{Bergh}). \Cref{thm1_cor} is now immediately follows from \cite[Theorem 1.2]{depth_formula}. However, we reprove the result in an  elementary approach. 
	
	It is usually very hard to determine whether $M \otimes_R N$ has torsion or not for $R$-modules $M$ and $N$. Such investigation for $N=M^*$, in particular, leads to a famous open question of  	Huneke and Wiegand (cf. \cite[p. 473]{HW}):
	\begin{conj}[Huneke -- Wiegand] \label{conj_1}
		Let $R$ be a one-dimensional local ring. Let $M$ be a non-free and torsion-free module on $R$. Assume $M$ has rank. Then the torsion submodule of $M \otimes_R M^*$ is non-zero, i.e., $M \otimes_R M^*$ has (non-zero) torsion, where for any $R$-module $N$, $N^* := Hom_R(N,R)$.
	\end{conj}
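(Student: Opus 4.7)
The final statement is the Huneke--Wiegand Conjecture itself, a well-known open problem; I do not pretend to settle \Cref{conj_1} in full. What I would attempt, matching the scope announced in the abstract, is the case in which $M$ is additionally two-periodic and of finite rank, leveraging \Cref{thm_1} and \Cref{thm1_cor}.

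The plan is to argue by contrapositive: assume $M$ is non-free, torsion-free, has a rank, is two-periodic, and that $M \otimes_R M^*$ is torsion-free; then derive that $M$ is free. Two-periodicity gives $\Omega_R^2 M \cong M$, so $M$ is a second syzygy, hence reflexive, and in fact has Gorenstein dimension zero. In particular $M^{**} \cong M$, and \Cref{thm_1} will be available with $N = M^*$ once Tor-independence of the pair $(M, M^*)$ is established.

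The crucial intermediate step is to promote ``torsion-free tensor product'' to ``Tor-independent pair.'' Because $M$ has rank, each $Tor_i^R(M, M^*)$ is a torsion $R$-module, hence of finite length. Two-periodicity of the minimal free resolution of $M$ makes the sequence of lengths $\ell\bigl(Tor_i^R(M, M^*)\bigr)$ eventually periodic; a length/Euler-characteristic computation against the hypothesis that the torsion of $M \otimes_R M^*$ vanishes should force $Tor_i^R(M, M^*) = 0$ for all $i \geq 1$. At that point \Cref{thm_1} yields the identification $M \otimes_R M^* \cong Hom_R(M^*, M^*)$, and \Cref{thm1_cor} gives
\[
depth\, M + depth\, M^* \;=\; depth\, R + depth\,(M \otimes_R M^*).
\]
Over a one-dimensional local ring with torsion-free tensor product, the right-hand side is at least $2$, forcing both $M$ and $M^*$ to be maximal Cohen--Macaulay. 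The isomorphism $M \otimes_R M^* \cong Hom_R(M^*, M^*)$ combined with a rank count then produces a free summand of $M$, and two-periodicity together with non-freeness of $M$ delivers the desired contradiction.

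The main obstacle is precisely the Tor-independence step: extracting vanishing of all $Tor_i^R(M, M^*)$ from torsion-freeness of $M \otimes_R M^*$ alone. Two-periodicity is essential here because it replaces an infinite sequence of Tor groups by a single pair and admits the length bookkeeping above; dropping this hypothesis is exactly what leaves the general \Cref{conj_1} open, and it is the reason I would not expect this plan to extend beyond the two-periodic-with-rank case that the paper targets.
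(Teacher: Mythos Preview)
You are right that \Cref{conj_1} is open and that the paper only establishes the two-periodic case (\Cref{thm_cuha}); that scoping is fine. But your proposed route to the two-periodic case has a genuine gap, and it diverges substantially from how the paper actually argues.

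\textbf{The gap.} Your sentence ``$M$ is a second syzygy, hence reflexive, and in fact has Gorenstein dimension zero'' is not justified. Being an arbitrary syzygy over a general one-dimensional local ring does not force $Ext^i_R(M,R)=0$ for $i\geq 1$; two-periodicity only tells you these Ext groups are $2$-periodic, so either all vanish or infinitely many survive, and nothing in the hypotheses of \Cref{thm_cuha} rules out the latter. Without finite Gorenstein dimension you cannot invoke \Cref{thm_1}, and your entire endgame (the isomorphism $M\otimes_R M^*\cong Hom_R(M^*,M^*)$, the depth formula, the ``rank count'') collapses. The paper's proof of \Cref{thm_cuha} never touches \Cref{thm_1} or \Cref{thm1_cor} for exactly this reason.

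\textbf{What the paper does instead.} The paper gets $Tor_2^R(M,M^*)=0$ from torsion-freeness of $M\otimes_R M^*$ by a direct diagram chase (\Cref{remark_1}, coming out of \Cref{prop_ext}); your ``length/Euler-characteristic computation'' does not by itself supply this step. Then the rank hypothesis on $M$ (hence on $M^*$) forces the class of $M^*$ to vanish in $\overline{G}(R)_{\mathbb Q}$, so the theta invariant $\Theta^R(M,M^*)$ is zero (\Cref{lemma_rigid}); combined with $Tor_2=0$ and two-periodicity this yields $Tor_1^R(M,M^*)=0$. Finally, \Cref{lemma_tor_1} --- a short Auslander--Goldman snake-lemma argument showing that $Tor_1^R(M,M^*)=0$ forces a nonzero two-periodic $M$ to be free --- delivers the contradiction. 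No Gorenstein-dimension hypothesis, no depth formula, no appeal to \Cref{thm_1}. Your depth-formula detour is in any case superfluous (two-periodicity already gives $depth\,M=depth\,R$), and your ``rank count produces a free summand'' is, at best, a disguised and unproved version of the Auslander--Goldman step that the paper isolates as \Cref{lemma_tor_1}.
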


	\noindent Huneke and Wiegand established the conjecture \ref{conj_1} for hypersurface domains in \cite[Theorem 3.1]{HW}. It is also known to be true over integrally closed domains (cf. \cite[Proposition 3.3]{A1}). The ideal version of the conjecture also holds for certain cases; for example, see \cite[Proposition 1.3]{celikbasideal} and \cite[Theorem 1.4]{GTTL}. Recently, Celikbas et al. solved \Cref{conj_1} for the class of two-periodic $R$-modules when $R$ is a one-dimensional domain. We point out that their proof can be modified to give the following:
	
	\begin{theorem} \label{thm_cuha} 
		Let $R$ be a one-dimensional local ring and $M$ be a non-zero two-periodic $R$-module that has rank. Then $M \otimes_R M^*$ has (non-zero) torsion. 
	\end{theorem}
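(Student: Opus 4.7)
My plan is to argue by contradiction, following Celikbas et al.'s argument for the one-dimensional domain case with appropriate modifications for the non-domain setting, where the rank hypothesis replaces the domain assumption. Assume $M \otimes_R M^*$ is torsion-free. Since $M$ is non-zero and two-periodic, $M \cong \Omega_R^2 M$, so $M$ is a non-free second syzygy and, by the rank hypothesis, torsion-free of some rank $r \geq 1$. Moreover, $\Omega_R M$ is also two-periodic, and all Betti numbers of $M$ (and of $\Omega_R M$) are equal, say to $n$, yielding the two exchange short exact sequences
\begin{align*}
(\ast) &: \ 0 \to \Omega_R M \to R^n \to M \to 0,\\
(\ast\ast) &: \ 0 \to M \to R^n \to \Omega_R M \to 0.
\end{align*}

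I next establish that $(M,M^*)$ is Tor-independent over $R$. Tensoring $(\ast\ast)$ with $M^*$ gives a four-term exact sequence whose leftmost term $\mathrm{Tor}_1^R(\Omega_R M, M^*)$ is a torsion module (having rank zero) that embeds into the torsion-free module $M \otimes_R M^*$; hence it vanishes. By 2-periodicity of $\mathrm{Tor}_*^R(M,M^*)$, all even-indexed Tor modules then vanish. A parallel argument using $(\ast)$, combined with a depth-lemma bootstrap (and the standard comparison between the torsion of a tensor product and $\mathrm{Tor}_1^R$ over one-dimensional local rings), yields $\mathrm{Tor}_1^R(M, M^*) = 0$; 2-periodicity then gives Tor-independence of $(M, M^*)$.

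With Tor-independence in hand, the tensored exchange sequences become two short exact sequences
\begin{align*}
&0 \to M \otimes_R M^* \to (M^*)^n \to \Omega_R M \otimes_R M^* \to 0,\\
&0 \to \Omega_R M \otimes_R M^* \to (M^*)^n \to M \otimes_R M^* \to 0.
\end{align*}
Following Celikbas et al.'s rigidity step --- with the rank data at each minimal prime of $R$ substituting for the domain hypothesis, which in the original argument ensures that $Q(R)$ is a field --- the simultaneous existence of these two sequences together with the 2-periodic structure forces $(\ast)$ to split, so $M$ is a direct summand of $R^n$. Thus $M$ is free, contradicting that $M$ is non-zero two-periodic. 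The main obstacle is precisely this last step: translating Tor-independence into the splitting of $(\ast)$ without the domain hypothesis. The required modification is to carry out Celikbas et al.'s generic analysis minimal-prime-by-minimal-prime over the (semilocal artinian) total quotient ring $Q(R)$, using the rank hypothesis on $M$, and to patch the local splittings globally via the two-periodic structure.
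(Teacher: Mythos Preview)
Your opening move is sound and matches the paper: tensoring $(\ast\ast)$ with $M^*$ and noting that $Tor_1^R(\Omega_R M, M^*)\cong Tor_2^R(M,M^*)$ is torsion (since $M$ is generically free once it has rank) and sits inside the torsion-free module $M\otimes_R M^*$, hence vanishes. This is exactly the paper's Remark~\ref{remark_1}.

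The genuine gap is the next step, where you claim a ``parallel argument using $(\ast)$, combined with a depth-lemma bootstrap'' gives $Tor_1^R(M,M^*)=0$. Tensoring $(\ast)$ with $M^*$ yields
\[
0 \to Tor_1^R(M,M^*) \to \Omega_R M \otimes_R M^* \to (M^*)^n \to M\otimes_R M^* \to 0,
\]
and to kill the left term you would need $\Omega_R M\otimes_R M^*$ torsion-free. But nothing you have established forces this: from the short exact sequence $0\to M\otimes_R M^*\to (M^*)^n\to \Omega_R M\otimes_R M^*\to 0$ one cannot conclude torsion-freeness of the cokernel, and the depth lemma gives only the trivial bound $depth(\Omega_R M\otimes_R M^*)\geq 0$. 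Over a non-Cohen--Macaulay one-dimensional ring you cannot even identify ``torsion-free'' with ``positive depth'', so the bootstrap does not get off the ground.

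The paper closes this gap with a genuinely different idea: Hochster's theta invariant. Since $M^*$ has rank, its class vanishes in $\overline{G}(R)_{\mathbb Q}$, so $\Theta^R(M,M^*)=\lambda\bigl(Tor_2^R(M,M^*)\bigr)-\lambda\bigl(Tor_1^R(M,M^*)\bigr)=0$ (Lemma~\ref{lemma_rigid}). Having already shown $Tor_2=0$, one gets $Tor_1=0$ for free. This length-counting via the Grothendieck group is precisely the substitute for the domain hypothesis; it is not recoverable from your exchange-sequence manipulations alone.

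Finally, your endgame (patching local splittings over $Q(R)$ prime-by-prime to split $(\ast)$) is both vaguer and harder than necessary. The paper instead invokes Lemma~\ref{lemma_tor_1}: a direct two-diagram chase shows that $Tor_1^R(M,M^*)=0$ forces the Auslander--Goldman map $M\otimes_R M^*\to Hom_R(M,M)$ to be surjective, whence $M$ is free by \cite[Theorem~A.1]{AG} --- immediately contradicting non-zero two-periodicity. If you do obtain Tor-independence, this is the clean way to finish.
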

	
	\noindent More generally, we show the following rigidity-type result:
	\begin{theorem} \label{thm_2} 
		Let $R$ be a one-dimensional local ring. Let $M$ be a generically free two-periodic $R$-module and $N$ be a torsionless $R$-module that has rank. Then $M \otimes_R N$ is torsion-free if and only if $M$ and $N$ are Tor-independent, that is, $Tor_i^R(M,N) = 0$ for $i \geq 1$.  
	\end{theorem}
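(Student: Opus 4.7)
The plan is to prove the biconditional by handling each direction separately. The reverse implication (Tor-independence implies torsion-freeness) is obtained by a direct manipulation of the periodic syzygy sequence, while the forward implication is the rigidity-type statement and constitutes the main content. For the reverse direction, I would exploit the short exact sequence $0 \to \Omega^2 M \to F_1 \to \Omega M \to 0$ arising from a minimal free resolution of $M$, together with the isomorphism $\Omega^2 M \cong M$ coming from two-periodicity. Tensoring with $N$ and using $Tor_1^R(\Omega M, N) \cong Tor_2^R(M, N) = 0$ under the Tor-independence hypothesis, we obtain an injection $M \otimes_R N \hookrightarrow F_1 \otimes_R N \cong N^{b_1}$, where $b_1$ is the first Betti number of $M$. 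Since $N$ is torsionless --- hence embeds in a free module and is therefore torsion-free --- the module $N^{b_1}$ is torsion-free, and so is its submodule $M \otimes_R N$.

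For the forward direction, assume $M \otimes_R N$ is torsion-free. By two-periodicity, $Tor_i^R(M, N) \cong Tor_{i+2}^R(M, N)$ for all $i \geq 1$, so it suffices to show that $Tor_1^R(M, N)$ and $Tor_2^R(M, N)$ both vanish. The vanishing of $Tor_2^R(M, N)$ is immediate from the same periodic sequence: the identification $\Omega^2 M \cong M$ exhibits $Tor_2^R(M, N) \cong Tor_1^R(\Omega M, N)$ as a submodule of $M \otimes_R N$. Because $M$ is generically free, $Tor_i^R(M, N)$ is supported only at the maximal ideal for $i \geq 1$ and is therefore a torsion module. A torsion submodule of the torsion-free module $M \otimes_R N$ must vanish, yielding $Tor_2^R(M, N) = 0$.

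The crux is the vanishing of $Tor_1^R(M, N)$. Here I would invoke rigidity for modules of reducible complexity: since $M$ is two-periodic, it has reducible complexity (as noted in the introduction), and results in the spirit of Bergh--Jorgensen (cf. the references \cite{Bergh} and \cite{depth_formula} already used in the paper) imply that once $Tor_i^R(M, N) = 0$ for some $i$ above an appropriate threshold, all subsequent Tors vanish. Applying this to $Tor_2^R(M, N) = 0$ yields $Tor_3^R(M, N) = 0$, and by two-periodicity $Tor_1^R(M, N) \cong Tor_3^R(M, N) = 0$, completing the argument.

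The main obstacle is verifying the precise hypotheses of the rigidity theorem used in the last step. The threshold typically involves the depth of $R$ and of $M$, so one must first reduce to the case where $R$ is Cohen--Macaulay (the depth-zero case for $R$ being vacuous, since over a one-dimensional local ring of depth zero every module is trivially torsion-free) and then check that $M$, $N$, and $M \otimes_R N$ are all maximal Cohen--Macaulay in that setting. The two-periodicity of $M$ (forcing $M$ to be maximal Cohen--Macaulay) together with the torsionless-with-rank hypothesis on $N$ (forcing $N$ to be maximal Cohen--Macaulay as well, since torsionless equals torsion-free over a one-dimensional Cohen--Macaulay local ring) are precisely what ensure these depth conditions hold, allowing the rigidity machinery to apply.
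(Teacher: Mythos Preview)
Your reduction of the problem to the single step ``$Tor_2^R(M,N)=0 \Rightarrow Tor_1^R(M,N)=0$'' is correct and matches the paper's argument: both your embedding $M\otimes_R N \hookrightarrow N^{b_1}$ and your identification of $Tor_2^R(M,N)$ with a torsion submodule of $M\otimes_R N$ are exactly the content of \Cref{lemma_3} specialized to $q=2$.

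The gap is in the final rigidity step. You invoke ``results in the spirit of Bergh--Jorgensen'' to pass from $Tor_2^R(M,N)=0$ to $Tor_3^R(M,N)=0$, but no such result applies here. The references \cite{Bergh} and \cite{depth_formula} establish a depth formula for modules of reducible complexity, not Tor-rigidity; the rigidity theorems in that circle of ideas require either finite CI-dimension or the vanishing of several consecutive Tors, neither of which is available. For a two-periodic $M$ the reducing sequence $0\to M\to F_1\to \Omega_R^1 M\to 0$ only reproduces the periodicity $Tor_i\cong Tor_{i+2}$ and gives no link between $Tor_1$ and $Tor_2$. Crucially, your argument never uses the hypothesis that $N$ has rank in the rigidity step; you use it only to force $N$ to be maximal Cohen--Macaulay, but torsionlessness alone already gives that over a one-dimensional Cohen--Macaulay ring.

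The paper's proof uses the rank hypothesis in exactly the place you are missing it. By \Cref{lemma_rigid}, since $N$ has rank its class vanishes in $\overline G(R)_{\mathbb Q}$, so Hochster's theta invariant $\Theta^R(M,N)=\lambda\bigl(Tor_2^R(M,N)\bigr)-\lambda\bigl(Tor_1^R(M,N)\bigr)$ is zero; hence $Tor_2^R(M,N)=0$ forces $\lambda\bigl(Tor_1^R(M,N)\bigr)=0$. This is the mechanism that converts a single Tor-vanishing into rigidity, and it cannot be replaced by general reducible-complexity considerations.
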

	  
	 The Huneke-Wiegand conjecture, over Gorenstein rings, is in fact a special case of a celebrated conjecture of Auslander and Reiten \cite{AR}. Towards the end of the paper, we remark that the two-periodic modules over arbitrary local rings satisfy the Auslander-Reiten conjecture. \\

	\noindent{\bf Acknowledgements}.The first author is supported by the INSPIRE faculty fellowship (Ref No.: IFA21-MA 161) funded by the DST, Govt. of India. The second author is partially supported by a NET Senior Research Fellowship from UGC, MHRD, Govt. of India. Both authors are grateful to Amit Tripathi for suggesting the problem and for several fruitful discussions. The second author would like to thank IIT Hyderabad for its hospitality during the Conference on Commutative Algebra and Algebraic Geometry (CoCAAG 2023), where the discussion about the project was initiated. We thank Souvik Dey for carefully reading the earlier version of the manuscript and sharing his helpful comments. Thanks are due to him for pointing out that \Cref{thm1_cor} is a special case of \cite[Theorem 1.2]{depth_formula}.   
	
	\section{Preliminary} 
	Let $R$ be a local ring and $M$, $N$ be two finitely generated $R$-modules. We say that $M$ and $N$ are projectively equivalent, written as $M \approx N$, if there exist two projective $R$-modules $P$ and $Q$ such that $M \oplus P \cong N \oplus Q$. 
	
	Let $F_1 \rar F_0 \rar M \rar 0$ be a free (equivalently, projective) presentation of $M.$  The Auslander dual of $M$, denoted by $D(M)$,  is the cokernel of the induced map $F_0^* \rar  F_1^*$ (\cite[Definition 2.5]{AB}), where $M^*$ denotes $Hom_R(M,R)$. Dualizing the above sequence, we get the following exact sequence
	\begin{equation}\label{equ_aus_dual_exact} 
		0 \rightarrow M^* \rightarrow F_0^* \rar  F_1^* \rightarrow D(M) \rightarrow 0. 
	\end{equation}
	It is worthwhile to note that $D(M)$ depends on the free presentation of $M$; however, it is determined uniquely up to projective equivalence (cf. \cite[Proposition 4]{Masek}). 
	It immediately follows from \eqref{equ_aus_dual_exact} that $M^* \approx \Omega^2_R D(M)$.  
	We refer \cite{Masek} for a few properties of the Auslander dual used in this paper, all of which are originally stated in \cite{AB}.
	
	An $R$-module $M$ is said to be torsionless (reflexive, respectively) if the natural map $M \rar M^{\ast \ast}$ is injective (bijective, respectively). Moreover, a reflexive module is called totally reflexive if $Ext^i_R(M,R)=Ext^i_R(M^*,R)=0$ for all $i \geq 1$. It is well known that a module is torsionless if and only if it embeds inside a free $R$-module. A torsionless module is thus torsion-free. 
	A reflexive module is a second syzygy of some $R$-module. Reflexivity and torsionlessness of $M$ can be expressed as vanishing of certain cohomologies of $D(M)$. More precisely, the following sequence is exact
	\begin{equation}\label{eqn_1}
		0 \rar Ext^1_R\left(D(M),R\right) \rar M \rar M^{**} \rar Ext^2_R(D(M),R) \rar 0.
	\end{equation}
	Auslander and Bridger in \cite[Theorem 2.8]{AB} extended the exact sequence \eqref{eqn_1} functorially to the higher cohomologies and proved that, for any $k \geq 0$, there exist exact sequences 
	\begin{align}   
		\label{eqn_3} \scalemath{0.9}{0 \rar Ext_R^1\left(D\left(\Omega^{k}_RM\right), -\right) \rar Tor^R_{k}\left(M,-\right) \rar Hom_R\left(Ext_R^k(M,R), -\right) \rar   Ext_R^2\left(D\left(\Omega^{k}_RM\right), -\right)},\\
		\label{eqn_4} \scalemath{0.9}{Tor^R_2\left(D\left(\Omega^{k}_RM\right), -\right) \rar	Ext_R^k\left(M, R\right) \otimes (-) \rar Ext_R^k\left(M, -\right) \rar Tor^R_1\left(D\left(\Omega^{k}_RM\right), -\right) \rar 0.}
	\end{align} 
	
	Let $M$ be a torsionless $R$-module, and let $f_1,\cdots, f_n \in M^*$ be a minimal generating set of $M^*$. Define a map $f: M \rar R^n$ as $x \mapsto (f_1(x), \cdots, f_n(x))$. Then $f$ is an injective map. The universal pushforward of $M$ is the $R$-module $M_1 := coker(f)$. We note the following result
	
	\begin{lemma} \label{lemma_ext1_equals_tor1}
		Let $R$ be a local ring and $M$ be a torsionless $R$-module. Assume $M_1$ to be the universal pushforward of $M$. Then 
		\begin{enumerate}
			\item $\Omega^1_RD(M) \approx D(M_1)$. 
			\item $Ext^1_R\left(D(M), N\right) \cong Tor_1^R\left(M_1,N\right)$ for any $R$-module $N$. 
		\end{enumerate}
	\end{lemma}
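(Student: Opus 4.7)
The plan is to handle (1) by computing $D(M_1)$ directly from a free presentation of $M_1$ obtained by splicing a minimal free presentation of $M$ with the universal pushforward sequence, and then handle (2) by comparing two descriptions of a certain kernel inside $M \otimes_R N$: one provided by the Auslander--Bridger sequence \eqref{eqn_3} and the other by the long exact Tor sequence of the universal pushforward.

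\textit{For (1).} Fix a minimal free presentation $F_1 \xrightarrow{d_1} F_0 \xrightarrow{\pi} M \to 0$, so that \eqref{equ_aus_dual_exact} reads $0 \to M^* \to F_0^* \to F_1^* \to D(M) \to 0$. Splicing $\pi$ with $f \colon M \hookrightarrow R^n$ yields a free presentation $F_0 \xrightarrow{f \circ \pi} R^n \to M_1 \to 0$, so by definition $D(M_1) = \mathrm{coker}\bigl((f \circ \pi)^* \colon (R^n)^* \to F_0^*\bigr)$. The dual $(f \circ \pi)^* = \pi^* \circ f^*$ factors through $M^*$: the map $f^* \colon (R^n)^* \to M^*$ sends $e_i \mapsto f_i$ and is surjective, since the $f_i$'s generate $M^*$, while $\pi^* \colon M^* \hookrightarrow F_0^*$ is the injection appearing in \eqref{equ_aus_dual_exact}. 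Hence $D(M_1) \cong F_0^*/M^* \cong \image(F_0^* \to F_1^*) = \ker(F_1^* \twoheadrightarrow D(M))$, which exhibits $D(M_1)$ as a first syzygy of $D(M)$ (as $F_1^*$ is free), so $\Omega^1_R D(M) \approx D(M_1)$.

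\textit{For (2).} Tensoring $0 \to M \to R^n \to M_1 \to 0$ with $N$ and using $Tor_1^R(R^n, N) = 0$ gives $Tor_1^R(M_1, N) \cong \ker(M \otimes_R N \to R^n \otimes_R N)$. Separately, specializing \eqref{eqn_3} to $k = 0$ (where $\Omega^0_R M = M$, $Tor_0^R(M, -) = M \otimes_R -$, and $Ext^0_R(M, R) = M^*$) yields $Ext^1_R(D(M), N) \cong \ker\bigl(M \otimes_R N \xrightarrow{\theta} Hom_R(M^*, N)\bigr)$, with $\theta(m \otimes n)(g) = g(m) \cdot n$. Dualizing the surjection $(R^n)^* \twoheadrightarrow M^*$ by $Hom_R(-, N)$ produces an injection $Hom_R(M^*, N) \hookrightarrow Hom_R(R^n, N) \cong R^n \otimes_R N$, and the main point is to verify that the composition $M \otimes_R N \xrightarrow{\theta} Hom_R(M^*, N) \hookrightarrow R^n \otimes_R N$ coincides with $f \otimes_R 1_N$; both should send $m \otimes n$ to $(f_1(m) n, \ldots, f_n(m) n)$ under the identification $R^n \otimes_R N \cong N^n$. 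This compatibility---tracking basis elements through $e_i \mapsto f_i$---is the principal, though mild, obstacle. Once it is established, the two kernels coincide, which proves (2).
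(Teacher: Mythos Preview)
Your argument is correct, but it proceeds along a different path from the paper's in both parts. For (1), the paper applies Masek's lemma to the universal pushforward sequence $0 \to M \to R^n \to M_1 \to 0$ to obtain a six-term exact sequence ending in $\cdots \to D(M_1) \to D(R^n) \to D(M) \to 0$; surjectivity of $(R^n)^* \to M^*$ then cuts this down to a short exact sequence with $D(R^n)$ projective, whence $D(M_1) \approx \Omega^1_R D(M)$. Your direct computation of $D(M_1)$ from the spliced presentation $F_0 \to R^n \to M_1 \to 0$ is more elementary and avoids invoking that external lemma. For (2), the paper specializes the sequence \eqref{eqn_4} at $k=1$ with $D(M)$ in place of $M$: torsionlessness of $M$ kills the $Ext^1_R(D(M),R)\otimes N$ term, giving $Ext^1_R(D(M),N) \cong Tor_1^R(D(\Omega^1_R D(M)),N)$, and then part (1) together with $D(D(M_1)) \approx M_1$ finishes. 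Your route via the $k=0$ case of \eqref{eqn_3} and the explicit identification of $\ker\theta$ with $\ker(f\otimes 1_N)$ is again more hands-on; it sidesteps the double-dual fact $DD \approx \mathrm{id}$ at the cost of the compatibility check you flag, which is indeed routine. Both approaches are clean; yours is a shade more self-contained, while the paper's illustrates how the Auslander--Bridger machinery in \eqref{eqn_4} does the bookkeeping automatically.
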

	\begin{proof} 
		Consider the defining short exact sequence of the universal pushforward 
		$$0 \rar M \rar R^n \rar M_1 \rar 0.$$
		Applying \cite[Lemma 6]{Masek}, we get the following exact sequence (for some suitable choice of Auslander duals)
		$$0 \rar M_1^* \rar (R^{n})^* \rar M^* \rar D(M_1) \rar D(R^n) \rar D(M) \rar 0.$$
		Note that the map $\left(R^{n}\right)^*  \rar M^*$ is surjective by the construction of the universal pushforward. The above exact sequence gives rise to the following short exact sequence
		$$0\rar D(M_1) \rar D(R^n) \rar D(M) \rar 0,$$
		and hence $D(M_1)$ is projectively equivalent to $\Omega^1_R(D(M)$ as $D(R^n) \approx 0$.
		
		To see the second part, consider the exact sequence \eqref{eqn_4}, with $k = 1$. We obtain the following exact sequence 
		\begin{align*}
			\scalemath{0.85}{Tor_2^R\left(D\left(\Omega^1_RD(M) \right), N\right) \rar Ext^1_R\left(D(M), R\right) \otimes_R N \rar Ext^1_R\left(D(M), N\right) \rar Tor_1^R\left(D\left(\Omega^1_RD(M)\right), N\right) \rar 0.}
		\end{align*} 
		The module $M$ being torsionless, the second term in the above exact sequence vanishes. Thus, we have isomorphisms,
		$$Ext^1_R\left(D(M), N\right) \cong Tor_1^R\left(D\left(\Omega^1_RD(M)\right), N\right) \cong Tor_1^R\left(D\left(D(M_1)\right), N\right) \cong Tor_1^R\left(M_1,N\right),$$
		where the second isomorphism uses the projective equivalence stated in the first part of this lemma, and the last isomorphism follows from the fact that $D(D(M_1)) \approx M_1$ (cf. \cite[Remark 3, p. 5789]{Masek}). 
	\end{proof}

	A minimal free resolution ${\bf F}: \ \ \rar F_1 \rar F_0$ of a two-periodic $R$-module $M$ has simpler description. The resolution ${\bf F}$ is determined by $F_0$ and $F_1$ only; other free modules are given by the alteration of $F_0$ and $F_1$. Thus, all the information of ${\bf F}$ is encoded in the following two short exact sequences
	\begin{align}
		\label{seq_A} 0 \rar \Omega^1_RM \rar F_0 \rar M \rar 0, \\
		\label{seq_B} 0 \rar M \rar F_1 \rar \Omega^1_RM \rar 0, 
	\end{align}
	where the $M$ in the exact sequence \eqref{seq_B} is identified with $\Omega^2_RM$. As $M$ is contained inside the free module $F_1$, it is torsionless. Thus it is torsion-free, in particular. Also, when the Gorenstein dimension of the two-periodic module is finite, they become totally reflexive. This follows from applying depth lemma on \ref{seq_A} and \ref{seq_B}. 
	
	Before proceeding further, let us make a definition. Given a $R$ module $M$, we say $M$ is \textit{projectively two-periodic} if $M \approx \Omega^2_RM$. It is worthwhile to note that two-periodic modules are projectively two-periodic. The next result says for certain projectively two-periodic modules, the regular dual and the Auslander dual are projectively equivalent.
	
	\begin{lemma} \label{aus-dual} 
		Let $R$ be a local ring and $M$ be a projectively two-periodic and totally reflexive $R$-module. Then $M^* \approx D(M)$.
	\end{lemma}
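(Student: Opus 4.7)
My plan is to pivot around the general identity $N^{*} \approx \Omega_R^2 D(N)$, which is an immediate consequence of the exact sequence \eqref{equ_aus_dual_exact} (it is in fact the observation made right after that display in the excerpt). The idea is to apply it with $N = D(M)$, then use the two-periodicity of $M$ together with the involutivity $D \circ D \approx \mathrm{id}$ to compare $D(M)^{*}$ with $M$, and finally exploit total reflexivity to upgrade projective equivalence to an honest isomorphism on the $D(M)$ side.

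In more detail, I would first substitute $N = D(M)$ into $N^{*} \approx \Omega_R^2 D(N)$ and use $D(D(M)) \approx M$ (cf.\ \cite[Remark 3, p. 5789]{Masek}) to obtain $D(M)^{*} \approx \Omega_R^2 M$. Projective two-periodicity of $M$ then gives $\Omega_R^2 M \approx M$, so $D(M)^{*} \approx M$. Dualizing both sides (and using that duals of projectives are projective, so that $(-)^{*}$ preserves the relation $\approx$) yields $D(M)^{**} \approx M^{*}$.

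It remains to show $D(M) \cong D(M)^{**}$. For this, I would apply the Auslander--Bridger exact sequence \eqref{eqn_1} with $M$ replaced by $D(M)$:
$$0 \to \Ext^1_R(D(D(M)), R) \to D(M) \to D(M)^{**} \to \Ext^2_R(D(D(M)), R) \to 0.$$
Since $D(D(M)) \approx M$ and $M$ is totally reflexive, $\Ext^i_R(D(D(M)), R) \cong \Ext^i_R(M, R) = 0$ for all $i \geq 1$ (Ext in positive degree is insensitive to projective summands). Hence the outer terms vanish, giving $D(M) \cong D(M)^{**}$, and combining with the previous step we conclude $D(M) \approx M^{*}$.

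The main conceptual step is recognizing $D(M)^{*} \approx M$; after that, the argument is bookkeeping. The one point of care is to verify that projective equivalence is preserved by $(-)^{*}$, by $\Omega_R^i$, and by $\Ext^i_R(-, R)$ for $i \geq 1$ --- all standard, but easy to overlook when chaining the identities.
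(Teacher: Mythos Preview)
Your proof is correct and follows essentially the same route as the paper: both establish $D(M)^* \approx \Omega_R^2 M \approx M$, dualize to get $D(M)^{**} \approx M^*$, and then argue that $D(M)$ is reflexive. The only cosmetic difference is that the paper reaches $D(M)^* \approx \Omega_R^2 M$ by explicitly dualizing the presentation of $M$ twice and invoking Schanuel's lemma, whereas you obtain it directly from the identity $N^* \approx \Omega_R^2 D(N)$ applied to $N = D(M)$ together with $D(D(M)) \approx M$.
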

	\begin{proof}
		Let $F_1 \rar F_0 \rar M \rar 0$ be a minimal presentation of $M$.  Since $M$ is totally reflexive, $Ext^1_R\left(M,R\right) = Ext^1_R\left(\Omega^1_RM,R\right)  = 0$. Then, dualizing the short exact sequence $0 \rar \Omega_R^1 M \rar F_0 \rar M \rar 0$, we get $Coker \left(M^* \hookrightarrow F_0\right)$ is $ \left(\Omega_R^1 M\right)^*$. Therefore, the exact sequence \eqref{equ_aus_dual_exact} splits into the following two short exact sequences 
		\begin{equation*}
			0 \rar M^* \rar F_0 \rar  \left(\Omega_R^1 M\right)^* \rar 0,
		\end{equation*} 
		\begin{equation*}
			0 \rar  \left(\Omega_R^1 M\right)^* \rar F_1 \rar D(M) \rar 0.
		\end{equation*} 	
		Dualizing both the short exact sequences again and using the fact that $\Omega_R^1 M$ is totally reflexive, we derive the following two short exact sequences 
		$$0 \rar \left(\Omega_R^1 M\right)^{**} \rar F_0 \rar M^{**} \rar 0, $$
		$$\text{ and \, \, } 0 \rar D(M)^* \rar F_1 \rar  \left(\Omega_R^1 M\right)^{**} \rar 0.$$
		Combining both sequences, we get
		\begin{equation}\label{eqn_temp1}
			0 \rar D(M)^* \rar F_1 \rar F_0 \rar M \rar 0.
		\end{equation}
		On the other hand, we have the natural exact sequence 
		\begin{equation}\label{eqn_temp2}
			0 \rar \Omega_R^2 M \rar  F_1 \rar F_0 \rar M \rar 0.
		\end{equation} 
		Schanuel's lemma is now applied to the pair of exact sequences \eqref{eqn_temp1} and \eqref{eqn_temp2} to conclude  $D(M)^* \approx \Omega_R^2 M$.  Since $M$ is projectively two-periodic, it follows that  $D(M)^* \approx M$. Therefore, $D(M)^{**} \approx M^*$ as projective equivalence is preserved under the process of dualizing. Also $M$ is totally reflexive; so is $D(M)$. Hence, $D(M)$ is reflexive, in particular. The result follows. 
	\end{proof} 
	
	We are now in a position to give a better description of the first syzygy of certain two-periodic modules, which will be useful in proving the main theorems.
	\begin{proposition} \label{lemma_univ_push}
		Let $R$ be a local ring and $M$ be a two-periodic totally reflexive $R$-module. Then $M^*$ is projectively two-periodic and $\Omega^1_R M \approx M_1,$ where $M_1$ is the universal pushforward of $M$.   
	\end{proposition}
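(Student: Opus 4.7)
The plan is to exploit the two short exact sequences \eqref{seq_A} and \eqref{seq_B} characterising the two-periodic module $M$, together with the defining sequence of the universal pushforward $M_1$, pushing everything through duality via total reflexivity. Since $M$ is totally reflexive, so is $\Omega^1_R M$, hence $Ext^1_R(M,R) = Ext^1_R(\Omega^1_R M, R) = 0$, and dualizing \eqref{seq_A} and \eqref{seq_B} produces the two short exact sequences
$$0 \to M^* \to F_0^* \to (\Omega^1_R M)^* \to 0, \qquad 0 \to (\Omega^1_R M)^* \to F_1^* \to M^* \to 0.$$
The second sequence exhibits $(\Omega^1_R M)^*$ as a first syzygy of $M^*$, so by Schanuel's lemma $(\Omega^1_R M)^* \approx \Omega^1_R M^*$; splicing this into the first sequence and applying Schanuel once more yields $M^* \approx \Omega^2_R M^*$, which is projective two-periodicity of $M^*$.

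For the identification with $M_1$, I would start from the defining short exact sequence $0 \to M \to R^n \to M_1 \to 0$, where the map $M \to R^n$ is built from a minimal generating set $f_1,\ldots,f_n$ of $M^*$. By construction, the dual map $R^n \to M^*$ sends the standard basis to $f_1,\ldots,f_n$ and is therefore a minimal cover; since $Ext^1_R(M,R)=0$, dualizing yields the short exact sequence
$$0 \to M_1^* \to R^n \to M^* \to 0,$$
from which $M_1^* \approx \Omega^1_R M^* \approx (\Omega^1_R M)^*$ by the first step.

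To promote the projective equivalence $M_1^* \approx (\Omega^1_R M)^*$ to $M_1 \approx \Omega^1_R M$, I would verify reflexivity of $M_1$ and then dualize once more. Since $M_1^*$ is projectively equivalent to a syzygy of the totally reflexive module $M^*$, it is itself totally reflexive, so in particular $Ext^1_R(M^*, R)=0$. Dualizing the sequence for $M_1^*$ then gives $0 \to M \to R^n \to M_1^{**} \to 0$, and comparing with the original defining sequence of $M_1$ via the five lemma forces $M_1 \cong M_1^{**}$. As $\Omega^1_R M$ is reflexive as well, applying $(-)^*$ to $M_1^* \approx (\Omega^1_R M)^*$ yields $M_1^{**} \approx (\Omega^1_R M)^{**}$, and hence $M_1 \approx \Omega^1_R M$.

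The main obstacle is more bookkeeping than substance: keeping careful track of when a projective equivalence is stable under $(-)^*$ and when it upgrades to an honest isomorphism, and in particular establishing reflexivity of $M_1$ so that the final dualization can be inverted without loss.
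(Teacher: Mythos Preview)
Your argument is correct and the first half (projective two-periodicity of $M^*$) matches the paper's proof essentially verbatim. For the identification $M_1 \approx \Omega^1_R M$, however, you take a genuinely different and more elementary route. The paper works entirely through the Auslander transpose: it invokes \Cref{lemma_ext1_equals_tor1}(1) to get $D(M_1) \approx \Omega^1_R D(M)$, observes $\Omega^1_R D(M) \approx (\Omega^1_R M)^*$ from the dualized resolution, applies \Cref{aus-dual} to the projectively two-periodic totally reflexive module $(\Omega^1_R M)^*$ to obtain $D((\Omega^1_R M)^*) \approx (\Omega^1_R M)^{**}$, and then chains $M_1 \approx D(D(M_1)) \approx D((\Omega^1_R M)^*) \approx \Omega^1_R M$. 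You instead dualize the universal pushforward sequence directly, use the fact that the dual map $R^n \to M^*$ is a minimal cover to identify $M_1^* = \Omega^1_R M^* \approx (\Omega^1_R M)^*$, and then recover $M_1$ by establishing its reflexivity via a double-dual comparison and the five lemma. Your approach bypasses the transpose machinery and the two preparatory lemmas altogether; the paper's approach has the advantage of fitting the result into the Auslander--Bridger framework already set up for the proof of \Cref{thm_1}.

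One small wording slip: when you write ``$M_1^*$ is \ldots\ totally reflexive, so in particular $Ext^1_R(M^*,R)=0$,'' the vanishing of $Ext^1_R(M^*,R)$ does not follow from total reflexivity of $M_1^*$ but rather directly from total reflexivity of $M$ (which by definition gives $Ext^i_R(M^*,R)=0$ for $i\ge 1$). This is only a misattribution of the reason, not a gap in the logic.
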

	\begin{proof} 
		We first show that $M^*$ is projectively two-periodic. Let ${\bf F} \rar M$ be a minimal free resolution of $M$. Since $M$ is totally reflexive, $Ext^1_R\left(M,R\right) = Ext^1_R\left(\Omega^1_RM, R\right)  = 0$. Both the vanishing implies $Im\left(F_0^* \rar F_1^*\right) \cong \left(\Omega^1_RM\right)^*$, and thus
		\begin{equation}\label{eqn_temp_3}
			\Omega^1_RD(M) \approx \left(\Omega^1_R M\right)^*.
		\end{equation}
		Since $M$ is two-periodic and totally reflexive; dualizing both the short exact sequences \eqref{seq_A} and \eqref{seq_B}, the following short exact sequences can be obtained $$0 \rar M^* \rar F_0 ^*\rar \left(\Omega_R^1 M\right)^* \rar 0,$$ and $$ 0 \rar \left(\Omega_R^1 M\right)^* \rar F_1^* \rar M^* \rar 0.$$
		Both the exact sequences give rise to the following exact sequence $$0 \rar M^* \rar F_0 ^*\rar  F_1^* \rar M^* \rar 0.$$ On the other hand, there always exist free $R$ modules $G_1, G_2$ making the following exact sequence 
		$$0 \rar \Omega_R^2 M^* \rar G_1 \rar  G_0 \rar M^* \rar 0.$$
		Now applying Schanuel's lemma to the last two exact sequences, we conclude that $M^* \approx \Omega_R^2 M^*$, that is, $M^*$ is projectively two periodic.  
		
		It is worthwhile to note that $\Omega_R^1M$ is two-periodic and totally reflexive as $M$ is so. By the same argument as above, we conclude that $\left(\Omega_R^1M\right)^*$ is projectively two periodic, that is, $\left(\Omega_R^1 M\right)^* \approx \Omega_R^2 \left(\Omega_R^1 M\right)^*$. Being the dual of a totally reflexive module, $\left(\Omega_R^1 M\right)^*$ is totally reflexive as well. Now apply \Cref{aus-dual} to conclude $D\left(\left(\Omega_R^1 M\right)^* \right) \approx \left(\Omega_R^1 M\right)^{**}$.
		
		By \Cref{lemma_ext1_equals_tor1}, we already know that $D(M_1) \approx \Omega^1_R D(M)$. Therefore, we get 
		$$M_1 \approx D\left(D(M_1)\right) \approx D\left(\Omega^1_RD(M)\right) \approx D\left(\left(\Omega^1_R M\right)^*\right) \approx \left(\Omega^1_R M\right)^{**} \cong \Omega^1_R M,$$
		where the third equivalence follows from the projective equivalence in \cref{eqn_temp_3} and the last isomorphism uses the fact that $\Omega^1_R(M)$ is reflexive.
	\end{proof}
	We say an $R$-module $M$ is generically free if it is locally free on $Ass (R)$.	Using a result of Huneke and Wiegand \cite{HW}, we have a better description of the torsion submodule of $M\otimes_R N$ when $M$ is $q$-periodic and generically free. An $R$-module $M$ is said to be of period $q$ (or, $q$-periodic for short) if it has a periodic minimal free resolution of period $q$, i.e., if $M \cong \Omega^q_R M.$
	
	\begin{lemma} \label{lemma_3}  Let $R$ be a local ring and $M$ be a $q$-periodic $R$-module which is generically free. Furthermore, assume $N$ is a torsionless $R$-module. Then the torsion part of $M \otimes_R N$, $T\left(M \otimes_R N \right) \cong Tor_q^R \left( M,N \right)$.
	\end{lemma}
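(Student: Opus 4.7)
The plan is to combine the Huneke--Wiegand pushforward identity with the $q$-periodicity of $M$. Let $N_1$ denote the universal pushforward of $N$, fitting in the defining short exact sequence
$$0 \rar N \rar R^n \rar N_1 \rar 0.$$
The Huneke--Wiegand input is the identification
$$T\!\left(M \otimes_R N\right) \cong Tor_1^R\!\left(M, N_1\right),$$
which is the content of the cited result of Huneke--Wiegand under the present hypotheses that $N$ is torsionless and $M$ is generically free. Once this is granted, the task reduces to identifying $Tor_1^R(M, N_1)$ with $Tor_q^R(M, N)$.

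The first reduction I would carry out is a dimension shift. Tensoring the pushforward sequence with $M$ and examining the long exact $Tor$ sequence, the fact that $R^n$ is free kills $Tor_i^R(M, R^n)$ for all $i \geq 1$, and this yields the isomorphisms
$$Tor_{i+1}^R\!\left(M, N_1\right) \cong Tor_i^R\!\left(M, N\right) \quad \text{for all } i \geq 1.$$
In particular, setting $i=q$ gives $Tor_{q+1}^R(M, N_1) \cong Tor_q^R(M, N)$.

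The second step is to use the periodicity of $M$. Since $M \cong \Omega^q_R M$, syzygy shifting gives $Tor_{j+q}^R(M, -) \cong Tor_j^R(M, -)$ for every $j \geq 1$; taking $j=1$ yields $Tor_1^R(M, N_1) \cong Tor_{q+1}^R(M, N_1)$. Concatenating with the previous display produces the desired isomorphism $Tor_1^R(M, N_1) \cong Tor_q^R(M, N)$, which, combined with the Huneke--Wiegand identification, completes the proof.

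The main substantive step is the Huneke--Wiegand identification itself. The point is that $M$ generically free implies $M$ (and hence $M^n$) is torsion-free, so the image of $M \otimes_R N \rar M^n$ is torsion-free and $T(M \otimes_R N)$ is contained in the kernel, which from the long exact sequence is $Tor_1^R(M, N_1)$; conversely, $Tor_1^R(M, N_1)_\mathfrak{p}=0$ at each $\mathfrak{p} \in Ass(R)$ because $M_\mathfrak{p}$ is free there, and this forces $Tor_1^R(M, N_1)$ to be torsion (prime avoidance against the union of associated primes produces a non-zerodivisor killing any given element). After this Huneke--Wiegand input, the rest of the argument is the routine two-step dimension shift described above.
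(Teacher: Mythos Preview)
Your argument is correct and follows essentially the same route as the paper: identify $T(M\otimes_R N)$ with a $Tor_1$ via the Huneke--Wiegand mechanism, then combine a dimension shift along the embedding $N\hookrightarrow R^n$ with the $q$-periodicity of $M$ to land on $Tor_q^R(M,N)$. The paper phrases the first step slightly differently, embedding both $M\subset F$ and $N\subset F'$ and citing \cite[Lemma 1.4]{HW} to get $T(M\otimes_R N)\cong Tor_2^R(F/M,F'/N)$, then shifting once more to reach $Tor_1^R(M,F'/N)$; your $N_1$ plays exactly the role of $F'/N$, and your direct argument for $T(M\otimes_R N)\cong Tor_1^R(M,N_1)$ is just an unpacking of that citation.

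One small correction to your justification: the implication ``$M$ generically free $\Rightarrow$ $M$ torsion-free'' is false in general (over a domain every finitely generated module is generically free, yet $R/(x)$ has torsion). What actually makes $M$ torsion-free here is that $M\cong\Omega^q_R M$ is a syzygy, hence torsionless; this is precisely how the paper opens its proof (``Since $M$ and $N$ are both torsionless\ldots''). With that one-word fix in the reasoning, your proof goes through unchanged.
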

	\begin{proof} Since $M$ and $N$ are both torsionless, there exist free $R$-modules $F$ and $F'$ such that $M \subset F$ and $N \subset F'$. By a result of Huneke and Wiegand \cite[Lemma 1.4]{HW}, it follows that $T\left(M \otimes_R N \right) \cong Tor_2^R\left( F/M, F'/N \right)$. Tensoring the sequence $0 \rar M \rar F \rar F/M \rar 0$ with $F'/N$, we get $$Tor_2^R\left(F/M, F'/N \right) \cong Tor_1^R \left( M, F'/N \right).$$ By periodicity of $M$, $Tor_1^R\left(M, F'/N \right) \cong Tor_{q+1}^R \left(M, F'/N\right).$ Tensoring the short exact sequence $0 \rar N \rar F' \rar F'/N \rar 0$ by $M$, we see that $Tor_{q+1}^R\left(M, F'/N \right)$ is isomorphic to $Tor_q^R\left(M,N\right).$  Thus, we have shown that 
		$$T\left(M \otimes_R N\right) \cong Tor_2^R\left(F/M, F'/N\right) \cong Tor_q^R\left(M,N\right).$$
	\end{proof} 
	\section{Proof of the main results}
	In this section, we start by giving the proof of \Cref{thm_1} and \Cref{thm1_cor}. We also discuss the torsionness of $M \otimes_R N^*$ and give some interesting results in the two-periodic case. We discuss the Hochster's theta function, using which we show Theorem \ref{thm_cuha}. We end by making an observation about the Auslander-Reiten conjecture.
	
	\begin{proof}[Proof of \Cref{thm_1}] 
		
		Putting $k=0$ in exact sequence \eqref{eqn_3}, we get the following exact sequence 
		$$0 \rightarrow Ext^1_R \left(D(M),N\right) \rightarrow M \otimes_R N \rightarrow Hom_R\left(M^*,N\right) \rightarrow Ext^2_R \left(D(M),N\right).$$
		We will now show that $Ext^1_R \left(D(M),N\right)=0$ and $Ext^2_R \left(D(M),N\right)=0$. Indeed, apply \Cref{lemma_ext1_equals_tor1} and \Cref{lemma_univ_push} to conclude the following
		$$Ext^1_R \left(D(M),N\right) \cong Tor_1^R\left(M_1, N\right) \cong Tor_1^R\left(\Omega^1_R(M), N\right) = Tor_2^R\left(M, N\right)= 0.$$
		To get $Ext^2_R \left(D(M),N\right)$, we dualize the short exact sequence \eqref{seq_B}, and get the following short exact sequence
		$$0 \rar \left( \Omega_R^1 M \right)^* \rar F_1^* \rar M^* \rar 0.$$ 
		The right exactness follows from the fact that $M$ is totally reflexive, and thus, $Ext^1_R\left( \Omega_R^1M,R\right)=0$. Therefore, for a suitable choice of Auslander duals, the following sequence
		$$0 \rar D\left(\Omega_R^1M\right) \rar D(F_1) \rar D(M) \rar 0$$ 
		is exact as follows from \cite[Lemma 6]{Masek}. Note that $D(F_1) \approx 0$, and hence $Ext^i_R\left( D(F_1),N\right)=0$ for $i \geq 1$ and $R$-module $N$. Thus we get 
		$$Ext^2_R(D(M), N) \cong Ext^1_R(D(\Omega_R^1M), N) \subset Tor_1^R(M,N) = 0.$$ 
		Thus, we have shown that $$ M \otimes_R N \cong Hom_R(M^*,N).$$ 
	\end{proof}
	
	\begin{proof}[Proof of \Cref{thm1_cor}]
		Since $M$ is a two-periodic module, we have $depth R = depth M$. So, it suffices to show $depth N = depth M \otimes_R N$. From Theorem \ref{thm_1}, we have  $M \otimes_R N \cong Hom_R(M^*,N)$.
		
		When the depth of $N$ is 0, we have an injection $R/m \hookrightarrow N$. Applying $Hom_R(M^*,-)$, we see $Hom_R(M^*,N)$ contains a non-zero module of depth 0. Therefore, the depth of $Hom_R(M^*, N)$ is $0$ as well. So, the depth formula is satisfied in this case.
		
		If $depth\, N > 0$, then $depth\, M \otimes_R N = depth\, Hom_R(M^*, N) > 0$. Hence $R,M,N,M \otimes_R N$ all have positive depth, so we have a common regular element. Let $d$ be the smallest dimension of $R$ for which the theorem fails to hold. Now, we go modulo the common regular element. Two-periodicity of the new quotient module is preserved by \cite[Theorem 1.1.5]{BH}, as the minimality is intact. It follows from \cite[Theorem 8.7]{Gdim} that the finiteness of the Gorenstein dimension is not affected after going modulo the regular element. Thus, by \cite[Lemma 10]{ST}, we have a contradiction on the minimality of $d$. 
	\end{proof}
	\begin{corollary} Let $R$ be a Gorenstein ring and let $M$ be a two-periodic $R$ module. Let $N$ be a finite $R$-module such that $M$ and $N$ are Tor-independent over $R.$ Then the pair $(M,N)$ satisfies the depth formula. 
	\end{corollary}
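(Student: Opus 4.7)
The plan is to reduce the corollary to \Cref{thm1_cor} by verifying that its hypotheses are automatic in the Gorenstein setting, and then to dispose separately of the borderline depth-zero case which \Cref{thm1_cor} explicitly excludes.

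First I would invoke the classical characterisation of Gorenstein rings due to Auslander and Bridger: a local ring $R$ is Gorenstein if and only if every finitely generated $R$-module has finite Gorenstein dimension (see \cite[Theorem 1.4.9]{Gdim}). In particular, the two-periodic module $M$ in the hypothesis of the corollary automatically has finite G-dimension, so the assumption on $M$ required by \Cref{thm1_cor} is supplied for free, without any extra work.

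Next I would split according to $depth\, R$. If $depth\, R > 0$, then all the hypotheses of \Cref{thm1_cor} are in place --- $M$ is two-periodic with finite Gorenstein dimension, the pair $(M,N)$ is Tor-independent, and $R$ has positive depth --- so the depth formula for $(M,N)$ follows immediately. If instead $depth\, R = 0$, then $R$ is zero-dimensional Gorenstein and hence Artinian; every non-zero finitely generated $R$-module has depth $0$, and Nakayama's lemma forces $M \otimes_R N \neq 0$ whenever $M$ and $N$ are non-zero. All four depth terms in the formula are then $0$ and the identity $0+0=0+0$ holds trivially (and the statement is vacuous if either module is zero).

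There is really no substantive obstacle here: the genuine content is already concentrated in \Cref{thm1_cor}, and the only point that requires a moment's thought is the dichotomy above, which is needed purely to bypass the positive-depth hypothesis under which \Cref{thm1_cor} was stated.
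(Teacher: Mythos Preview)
Your proposal is correct and follows exactly the paper's route: the paper simply observes that over a Gorenstein ring every finitely generated module has finite Gorenstein dimension, and then appeals to the main theorem (it writes \Cref{thm_1}, but clearly intends \Cref{thm1_cor}). Your separate treatment of the $depth\,R=0$ case is a small piece of tidying that the paper omits.
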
 
	\begin{proof}
		On Gorenstein rings, any finitely generated $R$-module has a finite Gorenstein dimension. The result follows from \Cref{thm_1}. \end{proof}
	\subsection{Torsionness of $M \otimes N^*$}
	If $M$ is generically free, then $M \otimes_R N^*$ being torsion-free has some interesting consequences. For instance, putting $k=0$ and $N=M$ in the exact sequence \eqref{eqn_4}, we get the following exact sequence
	\begin{align} \label{eqn_5} 
		\rar Tor_2^R\left(D(M), N\right) \rar M \otimes_R N^* \xrightarrow{\alpha} Hom_R(N,M) \rar Tor_1^R\left(D(M), N\right) \rar 0.  
	\end{align}  
	Since $M$ is generically free, the $Tor_2$ term is a torsion module. Therefore, its image vanishes, and hence the natural map $$\alpha: M \otimes_R N^* \rar Hom_R(N,M)$$ is injective. 
	\begin{remark} \label{remark_3}
		When $N=M$, the map $\alpha: M \otimes_R M^* \rar Hom_R(M,M)$ was studied by Auslander and Goldman  \cite[Theorem A.1]{AG}. They showed that the surjectivity of $\alpha$ implies $M$ is free. 
	\end{remark} 
	For two-periodic modules, we can further exploit these properties of the map $\alpha.$
	\begin{lemma} \label{lemma_tor_1} Let $R$ be a local ring and $M$ be a non-zero $R$-module that is two-periodic. Then $Tor_1^R(M, M^*) \neq 0.$   
	\end{lemma}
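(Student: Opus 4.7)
The plan is to argue by contradiction: assume $Tor_1^R(M,M^*)=0$ and derive that $M$ must be free, which contradicts $M$ being non-zero and two-periodic, since a free module has vanishing syzygies and so $M\cong \Omega_R^2 M = 0$. The mechanism for the reduction to freeness is the exact sequence \eqref{eqn_4} taken with $k=0$ and $N=M$,
$$Tor_2^R(D(M),M)\to M^*\otimes_R M\to Hom_R(M,M)\to Tor_1^R(D(M),M)\to 0,$$
whose right-most term, if it vanishes, forces the natural map $\alpha:M^*\otimes_R M\to Hom_R(M,M)$ to be surjective. The Auslander--Goldman fact recalled in \Cref{remark_3} then forces $M$ to be free.

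The whole argument therefore reduces to showing that $Tor_1^R(D(M),M)=0$, and for this I would combine two dimension shifts. First, the defining four-term exact sequence \eqref{equ_aus_dual_exact},
$$0\to M^*\to F_0^*\to F_1^*\to D(M)\to 0,$$
splits into two short exact sequences through its middle image; since $F_0^*$ and $F_1^*$ are free, two applications of the long exact sequence of Tor give $Tor_{i+2}^R(D(M),-)\cong Tor_i^R(M^*,-)$ for every $i\geq 1$, so in particular $Tor_3^R(D(M),M)\cong Tor_1^R(M^*,M)$. Second, the two-periodicity of $M$ makes its minimal free resolution periodic of period $2$, and hence $Tor_j^R(M,-)$ is itself periodic in $j$ with period $2$ for $j\geq 1$; this gives $Tor_1^R(M,D(M))\cong Tor_3^R(M,D(M))$. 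Chaining these two facts together with the symmetry of Tor,
$$Tor_1^R(D(M),M)\cong Tor_3^R(D(M),M)\cong Tor_1^R(M^*,M)=Tor_1^R(M,M^*)=0.$$

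I do not anticipate a serious obstacle; the main care needed is in book-keeping the dimension shifts correctly (splitting \eqref{equ_aus_dual_exact} into two short exact sequences and using freeness of $F_0^*$ and $F_1^*$) and in remembering that the two-periodicity of $M$ propagates to the $Tor$ functors $Tor_*^R(M,-)$. Once $Tor_1^R(D(M),M)=0$ has been established, the sequence \eqref{eqn_4} produces the surjectivity of $\alpha$, and \Cref{remark_3} closes out the proof by forcing $M$ to be free, contradicting $M\neq 0$.
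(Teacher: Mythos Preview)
Your argument is correct. Both your proof and the paper's argue by contradiction, aim to show the natural map $\alpha: M^*\otimes_R M\to Hom_R(M,M)$ is surjective, and then invoke the Auslander--Goldman criterion to force $M$ free, contradicting two-periodicity. The difference lies in how surjectivity of $\alpha$ is obtained. The paper never names $D(M)$; it tensors the two defining short exact sequences \eqref{seq_A} and \eqref{seq_B} with $M^*$, compares them to the $Hom_R(M,-)$ sequences, and runs the snake lemma twice: the first diagram (using $Tor_1^R(M,M^*)=0$ for left exactness) gives injectivity of the companion map $\beta:\Omega^1_R M\otimes M^*\to Hom_R(M,\Omega^1_R M)$, and feeding this into the second diagram forces $\alpha$ onto. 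Your route is more functorial: you read $\mathrm{coker}\,\alpha$ directly off the Auslander--Bridger sequence \eqref{eqn_4} as $Tor_1^R(D(M),M)$, then kill it by two dimension shifts---one from the four-term sequence \eqref{equ_aus_dual_exact} exhibiting $M^*$ as a second syzygy of $D(M)$, and one from the two-periodicity of $M$ in the other variable. Your approach is slicker and makes transparent why the hypothesis $Tor_1^R(M,M^*)=0$ is exactly what is needed; the paper's approach is more elementary in that it avoids the $D(M)$ machinery and stays at the level of explicit diagrams built from the periodic resolution.
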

	\begin{proof} Assume that $Tor_1^R (M,M^*) = 0$. Since $M$ is two-periodic, let ${\bf F}: \ \ \rar F_1 \rar F_0 \rar M \rar 0$ be a minimal free resolution of $M$ satisfying exact sequences \eqref{seq_A} and \eqref{seq_B}. Consider the following diagram 
		\begin{equation*}
			\begin{gathered}
				\xymatrix@C-=1.2cm@R-=2.2cm{0 \ar[r] & \Omega_R^1M \otimes_R M^*\ar[r] \ar[d]^{\beta}& F_0 \otimes_R M^* \ar[d] \ar[r]& M \otimes_R M^*\ar[r] \ar[d]^{\alpha} & 0 \\ 0 \ar[r] & Hom_R(M,\Omega_R^1M) \ar[r] & Hom_R(M, F_0) \ar[r]& Hom_R(M,M) \ar[r]  & } \end{gathered} 
		\end{equation*} Note that the vertical arrow in the middle is an isomorphism. Thus, $\beta$ is necessarily injective, thanks to the snake lemma. Consider another diagram
		\begin{equation}
			\begin{gathered}
				\xymatrix@C-=1.2cm@R-=2.2cm{ \ar[r] & M \otimes_R M^*\ar[r] \ar[d]^{\alpha}& F_1 \otimes_R M^* \ar[d] \ar[r]& \Omega^1_RM \otimes_R M^*\ar[r] \ar[d]^{\beta} & 0 \\ 0 \ar[r] & Hom_R(M,M) \ar[r] & Hom_R(M, F_1) \ar[r]& Hom_R(M,\Omega^1_R M) \ar[r]  & }
			\end{gathered} 
		\end{equation} 
		Since $\beta$ is an injective map as shown above and the middle vertical map is an isomorphism, therefore $\alpha$ is a surjective map. It follows from \cite[Theorem A.1]{AG} that $M$ is free. 
		Thus, $M$ is necessarily zero as follows from its two-periodicity. This gives us a contradiction.
	\end{proof}
	\begin{proposition} \label{prop_ext} Let $R$ be a local ring and $M$ be a two-periodic  generically free $R$-module. Let $N$ be any $R$-module. If $Ext^1_R(N,M) = 0$, then $M \otimes_R N^*$ is torsion-free. Moreover, if $Ext^1_R(N,R)=0$, then $Ext^1_R(N,M) = 0$ if and only if $M \otimes_R N^*$ is torsion-free. 
	\end{proposition}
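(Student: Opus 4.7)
The plan is to pivot to the first syzygy $\Omega^1_R M$ (which is itself two-periodic and generically free) and show that the torsion submodule of $M \otimes_R N^*$ appears as the cokernel of the natural map $\alpha_{\Omega^1_R M}\colon \Omega^1_R M \otimes_R N^* \to Hom_R(N, \Omega^1_R M)$. Applying \eqref{eqn_4} with $k = 0$ and substituting $N$ for $M$ and $\Omega^1_R M$ for the argument ``$-$'' yields the four-term exact sequence
\begin{equation*}
	Tor_2^R(D(N), \Omega^1_R M) \rar \Omega^1_R M \otimes_R N^* \xrightarrow{\alpha_{\Omega^1_R M}} Hom_R(N, \Omega^1_R M) \rar Tor_1^R(D(N), \Omega^1_R M) \rar 0,
\end{equation*}
so that $\operatorname{coker}(\alpha_{\Omega^1_R M}) \cong Tor_1^R(D(N), \Omega^1_R M)$. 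The remainder of the argument splits into two independent tasks: identify this cokernel with the torsion submodule of $M \otimes_R N^*$, and use the hypothesis $Ext^1_R(N, M) = 0$ to force it to vanish.

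For the identification, I would first invoke \Cref{lemma_3} to write the torsion of $M \otimes_R N^*$ as $Tor_2^R(M, N^*)$; a single dimension shift along \eqref{seq_A} rewrites this as $Tor_1^R(\Omega^1_R M, N^*)$. Splicing the dualized free presentation $0 \to N^* \to R^{n_0} \to R^{n_1} \to D(N) \to 0$ of $N$ into two short exact sequences and shifting dimensions twice gives $Tor_1^R(N^*, \Omega^1_R M) \cong Tor_3^R(D(N), \Omega^1_R M)$, which the two-periodicity of $\Omega^1_R M$ collapses back to $Tor_1^R(D(N), \Omega^1_R M)$. For the vanishing, applying $Hom_R(N, -)$ to \eqref{seq_B} and using $Ext^1_R(N, M) = 0$ makes $Hom_R(N, F_1) \to Hom_R(N, \Omega^1_R M)$ surjective. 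A short chase in the commutative square whose top row is the surjection $F_1 \otimes_R N^* \onto \Omega^1_R M \otimes_R N^*$ from the right-exactness of tensor and whose bottom row is the just-established surjection $Hom_R(N, F_1) \onto Hom_R(N, \Omega^1_R M)$, with vertical maps $\alpha_{F_1}$ (an isomorphism because $F_1$ is free) and $\alpha_{\Omega^1_R M}$, then forces $\alpha_{\Omega^1_R M}$ to be surjective; thus $\operatorname{coker}(\alpha_{\Omega^1_R M}) = 0$ and $M \otimes_R N^*$ is torsion-free.

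For the ``moreover'' statement every step above is reversible: if $M \otimes_R N^*$ is torsion-free, then $\alpha_{\Omega^1_R M}$ is surjective, hence so is $Hom_R(N, F_1) \to Hom_R(N, \Omega^1_R M)$, and consequently the connecting homomorphism $Hom_R(N, \Omega^1_R M) \to Ext^1_R(N, M)$ in the long exact sequence of $Hom_R(N, -)$ applied to \eqref{seq_B} vanishes. This places $Ext^1_R(N, M)$ inside $Ext^1_R(N, F_1)$, a finite direct sum of copies of $Ext^1_R(N, R)$, which is zero under the additional hypothesis $Ext^1_R(N, R) = 0$. I expect the main difficulty to be the bookkeeping in the identification of $Tor_1^R(D(N), \Omega^1_R M)$ with the torsion of $M \otimes_R N^*$ --- juggling the two-periodicity of $\Omega^1_R M$ with the splicing of the dualized presentation of $N$ --- but once that identification is in place, the rest is a short diagram chase relying only on the freeness of $F_1$.
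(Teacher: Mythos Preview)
Your proof is correct and in spirit matches the paper's: both arguments reduce the question to the surjectivity of the natural map $\alpha_{\Omega^1_R M}\colon \Omega^1_R M\otimes_R N^*\to Hom_R(N,\Omega^1_R M)$ (the paper calls it $\delta$), and both then use the commutative square coming from \eqref{seq_B}, with $\alpha_{F_1}$ an isomorphism, to tie surjectivity of $\alpha_{\Omega^1_R M}$ to the vanishing of $Ext^1_R(N,M)$ (and, under $Ext^1_R(N,R)=0$, to make this an equivalence).

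The only real difference is how you and the paper identify $T(M\otimes_R N^*)$ with $\operatorname{coker}(\alpha_{\Omega^1_R M})$. The paper sets up a second diagram from \eqref{seq_A} with vertical maps $\delta,\ \cong,\ \gamma$, applies the snake lemma to get $\operatorname{Ker}(\gamma)\cong\operatorname{Coker}(\delta)$, and then observes directly that $\operatorname{Ker}(\gamma)=T(M\otimes_R N^*)$ because $Hom_R(N,M)$ is torsion-free and the image of $Tor_2^R(D(N),M)$ in $M\otimes_R N^*$ is torsion (generic freeness of $M$). You instead invoke \Cref{lemma_3} to write $T(M\otimes_R N^*)\cong Tor_2^R(M,N^*)$, shift once along \eqref{seq_A}, shift twice along the spliced sequence $0\to N^*\to F_0^*\to F_1^*\to D(N)\to 0$, and finally collapse $Tor_3^R$ to $Tor_1^R$ via two-periodicity of $\Omega^1_R M$. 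Both routes land on $Tor_1^R(D(N),\Omega^1_R M)$; the paper's snake-lemma argument is a bit more direct and avoids the detour through \Cref{lemma_3}, while your route makes the role of periodicity in the identification more explicit.
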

	\begin{proof} 
		Let ${\bf F}: \ \ \rar F_1 \rar F_0 \rar M \rar 0$ be a minimal free resolution of $M$ satisfying exact sequences \eqref{seq_A} and \eqref{seq_B}. We have the following diagram
		\begin{equation*}
			\begin{gathered}
				\xymatrix@C-=1.2cm@R-=2.2cm{\ar[r] & \Omega_R^1M \otimes_R N^*\ar[r] \ar[d]^{\delta}& F_0 \otimes_R N^* \ar[d] \ar[r]& M \otimes_R N^*\ar[r] \ar[d]^{\gamma} & 0 \\ 0 \ar[r] & Hom_R(N,\Omega_R^1M) \ar[r] & Hom_R(N, F_0) \ar[r]& Hom_R(N,M) \ar[r]  & }
			\end{gathered} 
		\end{equation*}
		Since the middle vertical map is an isomorphism, by snake lemma, $Ker(\gamma) = Coker(\delta).$ Consider the leftmost column \textemdash \,  our assumptions imply that $Hom_R(N,M)$ is torsion-free, so $$T\left(M \otimes_R N^*\right) = T\left(Ker \, \gamma\right) = T\left(Tor_2^R\left(D(N), M\right) \rar M \otimes_R N^*\right)$$ (where for any $R$-module, $N$, $T(N)$ denotes the torsion submodule of $N$). Note that $Tor_2^R(D(N), M)$ is a torsion module. Thus, $M \otimes_R N^*$ is torsion-free if and only if the map $\gamma$ is injective (equivalently, if and only if $\delta$ is surjective).
		
		Consider the following diagram
		\begin{equation} \label{dgm_2}
			\begin{gathered}
				\xymatrix@C-=1.2cm@R-=2.2cm{ \ar[r] & M \otimes_R N^*\ar[r] \ar[d]^{\gamma}& F_1 \otimes_R N^* \ar[d] \ar[r]& \Omega^1_RM \otimes_R N^*\ar[r] \ar[d]^{\delta} & 0 \\ 0 \ar[r] & Hom_R(N,M) \ar[r] & Hom_R(N, F_1) \ar[r]& Hom_R(N,\Omega^1_RM) \ar[r]  & }
			\end{gathered} 
		\end{equation} 
		Here again, the middle map is an isomorphism. 
		If $Ext^1_R(N,M)=0$, the map $Hom_R(N, F_1) \rar Hom_R (N,\Omega^1_RM)$ is onto. It follows from the commutativity of the right diagram of \eqref{dgm_2} that $\delta$ is surjective. If we assume  $Ext^1_R(N,R)=0$, then $$Coker\left( Hom_R(N, F_1) \rar Hom_R (N,\Omega^1_RM)\right) = Ext^1_R(N,M).$$
		The result follows.
	\end{proof}
	
	\begin{eg}
		Consider the one-dimensional hypersurface ring $R= k[[X,Y]]/(XY)$, where $k$ is a field. Let $M = R/(X).$ Then $M$ is a maximal Cohen-Macaulay module. Clearly, $M$ is 2-periodic and admits a minimal free resolution of the form $$\rar R \xrightarrow{Y} R \xrightarrow{X} R \rar M \rar 0.$$ One can show that $M$ is generically free,  $Hom_R(M, R) \cong M$ and $Hom_R(M,\Omega^1_R M) = 0$. In particular, $M \otimes_R M^* \cong M$ is torsion-free and $Ext^1_R(M,M) = 0$.
	\end{eg}
	\begin{remark}
		When $R$ is a Gorenstein ring, the one-dimensional version of \Cref{prop_ext} follows from a more general result of Huneke and Jorgensen \cite[Theorem 5.9]{HJ}, which explores the connection between the vanishing of certain $d$ consecutive Ext modules and the Cohen-Macaulayness of a tensor product.
	\end{remark}
	\begin{remark} \label{remark_1} With assumptions as in \Cref{prop_ext}, and putting $N = M$, we observe that if $M \otimes_R M^*$ is torsion-free, then the top exact sequence in diagram \eqref{dgm_2} shows that $Tor_1^R\left(\Omega^1_RM, M^*\right) = 0$. In particular, $Tor_2^R\left(M, M^*\right) \cong Tor_1^R\left(\Omega^1_RM, M^*\right) = 0$. We will need this observation later. 
	\end{remark}


		
	
		\subsection{Hochster's theta invariant}
	
	In \cite{CUHA}, Celikbas et al. have given a generalization of Hochster's theta invariant (cf. \cite{H81}) for certain two-periodic modules. In this section, we briefly recall this generalization and some properties. We use these methods to prove a variant of \cite[Theorem 1.3]{CUHA}.
	\begin{defn}  \label{defn_hoch}
		Let $R$ be a one-dimensional local ring and $M$ be a $R$-module. Furthermore, assume $M$ is two-periodic and generically free. It follows that for any $R$-module $N$, and for any $i \geq 1$, $Tor_{i}^R (M,N) \cong Tor_{i+2}^R (M,N)$ and $Tor_{i}^R (M,N)$ has finite length. Then the Hochster's theta invariant for the pair $(M,N)$, denoted by $\Theta^R(M,N)$, is defined as 
		$$\Theta^R (M,N) := \lambda \left(Tor_{2n}^R \left( M,N \right)\right) - \lambda\left(Tor_{2n-1}^R \left(M,N\right)\right),$$ for some $n \geq 1$. Here $\lambda(G)$ denotes the length of the module $G$. It is clear that $\Theta^R (M,N)$ is well-defined.
	\end{defn}
	
	\begin{remark} \label{theta_additive} Our definition is subsumed by the more general definition given in \cite{CUHA}. Thus, the general properties of theta invariant that they have discussed hold for our case as well. In particular, for any short exact sequence of $R$-modules,
		$ 0 \rar N_1 \rar N_2 \rar N_3 \rar 0 $, we have 
		$$ \Theta^R (M,N_2) = \Theta^R (M,N_1) + \Theta^R (M,N_3).$$ See \cite[Theorem 3.2]{CUHA} for a proof of this fact. 
	\end{remark}
	\begin{defn}
		We say a pair of modules $(M,N)$ over the ring $R$ is Tor-rigid if $Tor_i^R(M,N)=0$ for some $i \geq 1$ implies $Tor_j^R(M,N)=0$ for all $j \geq i$.
	\end{defn}
	
	We denote by $\overline{G} (R)_\mathbb{Q}$ the reduced Grothendieck group with rational coefficients, that is, $\overline{G} (R)_\mathbb{Q}=\left(G(R)/ \mathbb{Z}\cdot[R] \right) \otimes_{\mathbb{Z}} \mathbb{Q}$, where $G(R)$ is the Grothendieck group of finitely generated $R$-modules and $[R]$ denotes the class of $R$ in $G(R)$. 
	
	\begin{lemma} \label{lemma_rigid}
		Let $R$ be a one-dimensional local ring and $M$ be a non-zero two-periodic generically free $R$-module. Let $N$ be a $R$-module which has rank. Then $\Theta^R(M,N)$ vanishes. In particular, the pair $(M,N)$ is Tor-rigid. 
	\end{lemma}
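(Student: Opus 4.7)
The plan is to exploit the additivity of $\Theta^R(M,-)$ on short exact sequences (\Cref{theta_additive}) together with a filtration of $N$ to reduce the computation of $\Theta^R(M,N)$ to $\Theta^R(M,R/\mathfrak{m})$, which I will show vanishes. Tor-rigidity will then follow formally from two-periodicity.

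First I would compute $\Theta^R(M,k)=0$, where $k = R/\mathfrak{m}$. By definition, $\Theta^R(M,k) = \beta_{2n}(M) - \beta_{2n-1}(M)$, and by two-periodicity this equals $\beta_0(M) - \beta_1(M)$, where $\beta_i(M)$ denotes the $i$-th Betti number of $M$. To see that $\beta_0(M)=\beta_1(M)$, localize the sequences \eqref{seq_A} and \eqref{seq_B} at any minimal prime $P$ of $R$. Since $M$ is generically free, $M_P$ is free over $R_P$, so both localized sequences split and yield $R_P^{\beta_0} \cong M_P \oplus (\Omega^1_R M)_P \cong R_P^{\beta_1}$. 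As $R_P$ is local (in fact Artinian), this forces $\beta_0 = \beta_1$, hence $\Theta^R(M,k) = 0$.

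Next I reduce the general case to the computation above. Since $N$ has rank $r$, its torsion submodule $t(N)$ vanishes at every minimal prime of $R$, hence is supported only on $\mathfrak{m}$ and has finite length. The quotient $\overline{N} := N/t(N)$ is torsion-free of rank $r$; choosing $r$ elements of $\overline{N}$ whose images form a basis of $\overline{N}\otimes_R K$ after clearing denominators (with $K$ the total quotient ring of $R$) produces a short exact sequence $0 \to R^r \to \overline{N} \to T \to 0$ with $T$ of finite length. Combined with $0 \to t(N) \to N \to \overline{N} \to 0$, additivity of $\Theta^R(M,-)$ yields
\[
\Theta^R(M,N) = \Theta^R(M,t(N)) + \Theta^R(M,R^r) + \Theta^R(M,T).
\]
The middle term vanishes since $R$ is free, and the remaining two modules have finite length, hence admit composition series with all factors isomorphic to $k$. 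Iterated additivity reduces them to integer multiples of $\Theta^R(M,k) = 0$, and thus $\Theta^R(M,N) = 0$.

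Finally, for Tor-rigidity, suppose $Tor_i^R(M,N) = 0$ for some $i \geq 1$. Two-periodicity gives $Tor_{i+2j}^R(M,N) = 0$ for every $j \geq 0$, and choosing $n \geq 1$ with $2n = i$ or $2n-1 = i$, the identity $\Theta^R(M,N) = \lambda(Tor_{2n}^R(M,N)) - \lambda(Tor_{2n-1}^R(M,N)) = 0$ forces the other of $Tor_i^R$ and $Tor_{i\pm1}^R$ to vanish as well; two-periodicity then kills every $Tor_j^R(M,N)$ for $j \geq i$. The most delicate step is producing the embedding $R^r \hookrightarrow \overline{N}$ with finite-length cokernel: one must choose generators of $\overline{N}$ that become a basis after inverting all non-zerodivisors, which is routine when $R$ is a domain but requires a Chinese-remainder--type argument in the presence of multiple minimal primes.
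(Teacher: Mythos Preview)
Your argument is correct and follows a more hands-on route than the paper. The paper observes that $\Theta^R(M,-)$ descends to a map $\overline{G}(R)_\mathbb{Q} \to \mathbb{Q}$ and then invokes a cited result (Celikbas--Dao, as recorded in \cite{CUHA}) asserting that any module with rank over a one-dimensional local ring has trivial class in $\overline{G}(R)_\mathbb{Q}$; vanishing of $\Theta^R(M,N)$ is then immediate, and Tor-rigidity follows from two-periodicity just as in your last paragraph. You instead filter $N$ into free and finite-length pieces and reduce by additivity to a direct computation of $\Theta^R(M,k)$ from the Betti numbers, using generic freeness of $M$ at a minimal prime to see $\beta_0(M)=\beta_1(M)$. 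The paper's argument is shorter and more conceptual, and would apply verbatim to any additive invariant on $\overline{G}(R)_\mathbb{Q}$; your computation $\Theta^R(M,k)=0$ instead exploits the specific structure of $M$ rather than a structural fact about $N$, which makes the proof self-contained and independent of the Grothendieck-group result.

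One minor comment: the embedding $R^r \hookrightarrow \overline{N}$ with finite-length cokernel is less delicate than you suggest. Since $\overline{N}$ is torsion-free it sits inside $\overline{N}\otimes_R K \cong K^r$, and because $\overline{N}$ generates $K^r$ over $K$, each standard basis vector $e_i$ is a $K$-linear combination of elements of $\overline{N}$; clearing a common non-zerodivisor denominator $s$ produces $x_i := s e_i \in \overline{N}$, and these $x_i$ visibly form a $K$-basis of $K^r$, so they span a free $R$-submodule of rank $r$ whose cokernel vanishes at every associated prime. No Chinese-remainder argument is needed.
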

	\begin{proof} Note that the theta invariant induces a map
		$$ \Theta^R (M,-) : \overline{G} (R)_\mathbb{Q} \rar \mathbb{Q}.$$ This map is in fact well-defined as follows from \cite[corollary 3.3]{CUHA}. Since $N$ is a module with rank over a one-dimensional local ring, it follows from \cite[Proposition 2.5]{CD}, as stated in \cite[2.14]{CUHA}, that $[N]=0$ in $\overline{G} (R)_\mathbb{Q}$, where $[N]$ denotes the class of $N$ in $\overline{G} (R)_\mathbb{Q}$. Hence, $\Theta^R (M,N)$ vanishes. Since $M$ is two-periodic, this implies that the pair $(M,N)$ is Tor-rigid.
	\end{proof}

	
	\begin{proof}[Proof of \Cref{thm_cuha}]
		Assume the contrary, i.e., $M\otimes_R M^*$ is torsion-free. By \Cref{remark_1}, the hypothesis implies that $Tor_2^R(M,M^*) = 0$. Since $M^*$ has rank, by \Cref{lemma_rigid}, $\Theta^R(M,M^*)$ vanishes. Therefore, $Tor_1^R(M,M^*) = 0$ which contradicts  \Cref{lemma_tor_1}.
	\end{proof}

	\begin{proof}[Proof of \Cref{thm_2}]
		By \Cref{lemma_3}, we know that $M \otimes_R N$ is torsion-free if and only if $T(M \otimes_R N) \cong Tor_2^R(M,N) = 0$. Moreover, \Cref{lemma_rigid} immediately yields that the pair $(M,N)$ is Tor-rigid. Therefore,  $Tor_2^R(M,N) = 0$ if and only if $Tor_j^R(M,N) = 0$ for $j \geq 2$. We get
		$$Tor_1^R\left(M,N\right) \cong Tor_1^R\left(\Omega_R^2 M,N\right) \cong Tor_3^R\left(M,N\right) =  0,$$
		where the first isomorphism follows from the two-periodicity of $M$. This completes the proof.
	\end{proof}
	\subsection{Auslander-Reiten Conjecture}
	
	Let us recall the famous Auslander-Reiten conjecture \cite{AR}. 
	 \begin{conj}
		Let $R$  be a local ring and $M$ be a $R$-module. If $Ext^i_R(M,M \oplus R)=0$ for all $i \geq 1$, then $M$ is free.   
	\end{conj}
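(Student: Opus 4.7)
The statement displayed is the general Auslander--Reiten conjecture, which remains open; the paper's actual contribution (as advertised in the introduction) is the verification of this conjecture for the distinguished class of two-periodic modules, and accordingly my plan is to carry out precisely this case. So the plan is to show that if $M$ is a two-periodic $R$-module with $Ext^i_R(M, M \oplus R)=0$ for all $i \geq 1$, then $M=0$ (equivalently, $M$ is free, since the zero module is the only two-periodic free $R$-module).

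\textbf{Setup and reduction to total reflexivity.} Suppose $M$ is a nonzero two-periodic $R$-module satisfying the Ext-vanishing hypothesis. Since $M \cong \Omega_R^2 M$ embeds in the free module $F_1$ (via \eqref{seq_B}), it is a second syzygy, hence reflexive. Using $Ext^i_R(M,R)=0$ for $i \geq 1$, I would dualize the periodic minimal free resolution of $M$ to produce an exact complex $0 \to M^* \to F_0^* \to F_1^* \to F_0^* \to F_1^* \to \cdots$; breaking this into short exact pieces shows $M^*$ is projectively two-periodic (cf.\ \Cref{lemma_univ_push}). Then dualizing the minimal free resolution of $M^*$ recovers the periodic exact complex ending in $M \cong M^{**}$, which forces $Ext^i_R(M^*,R)=0$ for $i \geq 1$. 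Thus $M$ has Gorenstein dimension zero, i.e.\ $M$ is totally reflexive.

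\textbf{Main step via Auslander--Bridger duality.} I would next apply the exact sequence \eqref{eqn_4} with $k=2$ and $N=M$:
\begin{equation*}
Tor_2^R(D(\Omega_R^2 M), M) \to Ext^2_R(M,R) \otimes_R M \to Ext^2_R(M,M) \to Tor_1^R(D(\Omega_R^2 M), M) \to 0.
\end{equation*}
The A--R hypothesis annihilates both middle terms, yielding $Tor_1^R(D(\Omega_R^2 M), M)=0$. By two-periodicity $\Omega_R^2 M \cong M$, and by \Cref{aus-dual} applied to the totally reflexive, projectively two-periodic module $M$ one has $D(M) \approx M^*$. Combining these, the vanishing above reads $Tor_1^R(M^*,M) \cong Tor_1^R(M,M^*) = 0$. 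But \Cref{lemma_tor_1} asserts $Tor_1^R(M,M^*) \neq 0$ for every nonzero two-periodic module, which is a contradiction. Hence $M=0$, which is free.

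\textbf{Scope and main obstacle.} Both \Cref{aus-dual} (which furnishes $D(M) \approx M^*$) and \Cref{lemma_tor_1} (which furnishes the nonvanishing $Tor_1^R(M,M^*) \neq 0$) use the two-periodic hypothesis in an essential way; the clean identification $D(\Omega_R^2 M) \approx M^*$ that drives the contradiction collapses as soon as the periodic structure is removed. Without this, neither ingredient is available, which is the principal reason the argument does not extend to arbitrary modules, consistent with the full Auslander--Reiten conjecture remaining open. The most delicate bookkeeping step in the plan is tracking projective equivalences carefully enough to justify the substitution $D(\Omega_R^2 M) \approx D(M) \approx M^*$, which is precisely what \Cref{lemma_univ_push} together with \Cref{aus-dual} is designed to provide.
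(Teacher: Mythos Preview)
Your approach is correct in outline but takes a more circuitous route than the paper, and contains one unjustified step.

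The paper argues directly. From $0 \to M \to F_1 \to \Omega_R^1 M \to 0$ together with $Ext^{1}_R(M,R)=Ext^{2}_R(M,R)=0$, one obtains $Ext^1_R(M,\Omega_R^1 M) \cong Ext^2_R(M,M) = 0$. A short diagram chase (precisely the one appearing in the proof of \Cref{lemma_tor_1}) then shows the natural map $\gamma\colon M \otimes_R M^* \to Hom_R(M,M)$ is surjective, and Auslander--Goldman forces $M$ to be free, hence zero by two-periodicity. No total reflexivity, no \Cref{aus-dual}, and no Auslander--Bridger sequence are invoked; only three Ext groups enter.

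Your route instead converts the Ext-vanishing into $Tor_1^R(M,M^*)=0$ via the identification $D(\Omega_R^2 M) \approx M^*$, and then appeals to \Cref{lemma_tor_1}. This works, but it is a detour: the proof of \Cref{lemma_tor_1} \emph{is} the diagram chase the paper performs directly, so you are wrapping the decisive step inside a lemma and layering Auslander--Bridger machinery on top. There is also one genuine gap: the claim ``$M$ is a second syzygy, hence reflexive'' is false over arbitrary local rings. Reflexivity does hold in your situation, but it must be \emph{deduced}: the vanishing $Ext^i_R(M,R)=0$ makes the dualized periodic complex exact, which already yields $D(M)\cong M^*$ and $Ext^i_R(M^*,R)=0$; then \eqref{eqn_1} gives $M\cong M^{**}$. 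If you reorder Step~1 so that reflexivity is the output rather than an input, the argument closes --- but by that point $D(M)\approx M^*$ is already in hand, which is all your main step actually requires, rendering the appeal to \Cref{aus-dual} (and hence to total reflexivity) superfluous.
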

	
	\noindent This conjecture is known to be true for local complete intersection rings, see \cite[Proposition 1.9]{ARC_CI}. However, it is open over arbitrary local Gorenstein rings. We observe that it holds for two-periodic modules over arbitrary local rings. 
	
	\begin{proposition}
		Let $R$ be a local ring and $M$ be an eventually two-periodic $R$-module. Let $s$ be the smallest non-negative integer such that $\Omega^s_R M=: N$ is two-periodic. If $Ext^i_R(N,N \oplus R)=0$ for all $i \geq 1$, then  $pd_R M \leq s$. 
	\end{proposition}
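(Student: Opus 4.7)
The plan is to reduce the conclusion $pd_R M \leq s$ to showing that the module $N = \Omega^s_R M$ is free. Indeed, if $N$ is projective, then a projective resolution of $M$ of length at most $s$ is obtained by truncating the minimal free resolution, which forces $pd_R M \leq s$. So the entire content of the proposition is the implication: a two-periodic module $N$ with $\mathrm{Ext}^i_R(N,N) = 0$ for all $i \geq 1$ must be free.

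To prove this, I would exploit the distinguished short exact sequences \eqref{seq_A} and \eqref{seq_B} attached to the two-periodic module $N$, namely
\begin{align*}
0 \longrightarrow \Omega^1_R N \longrightarrow F_0 \longrightarrow N \longrightarrow 0, \\
0 \longrightarrow N \longrightarrow F_1 \longrightarrow \Omega^1_R N \longrightarrow 0,
\end{align*}
where the $N$ on the left of the second sequence is the identification $N \cong \Omega^2_R N$. The splitting criterion tells us that the second sequence splits as soon as $\mathrm{Ext}^1_R(\Omega^1_R N, N) = 0$. To compute this Ext, I would apply $\mathrm{Hom}_R(-,N)$ to the first sequence and use the freeness of $F_0$; dimension shifting then gives
\[
\mathrm{Ext}^1_R(\Omega^1_R N, N) \;\cong\; \mathrm{Ext}^2_R(N,N),
\]
which vanishes by hypothesis. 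Consequently $F_1 \cong N \oplus \Omega^1_R N$, so $N$ is a direct summand of a free module, hence projective, and hence free because $R$ is local and $N$ is finitely generated. This completes the reduction and yields $pd_R M \leq s$.

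There is essentially no serious obstacle in this argument; the proof is a clean application of the rigid structure of two-periodic modules together with a single use of the Ext-splitting criterion. It is worth noting that the argument uses only the vanishing $\mathrm{Ext}^i_R(N,N) = 0$ for $i \geq 1$, so the hypothesis $\mathrm{Ext}^i_R(N,R) = 0$ is in fact not required for this particular implication — it appears in the statement only because one is recasting the Auslander--Reiten conjecture, whose original formulation tests $M \oplus R$. The only minor subtlety is a sanity check on small $s$: if $N$ happens to be zero, then $pd_R M \leq s-1 \leq s$, and in all other cases the free direct summand produced above truncates the resolution at step $s$, giving the stated bound.
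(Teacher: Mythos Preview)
Your proof is correct and takes a genuinely different route from the paper's. The paper argues via the Auslander--Goldman criterion: it shows that the natural map $\gamma: N \otimes_R N^* \to Hom_R(N,N)$ is surjective, and then invokes \cite[Theorem A.1]{AG} to conclude that $N$ is free. To obtain surjectivity of $\gamma$, the paper needs $Ext^1_R(N, \Omega^1_R N) = 0$, which it deduces by applying $Hom_R(N,-)$ to the sequence \eqref{seq_B}; this step genuinely uses the vanishing of $Ext^1_R(N,R)$ and $Ext^2_R(N,R)$. Your argument instead applies $Hom_R(-,N)$ to the sequence \eqref{seq_A} to get $Ext^1_R(\Omega^1_R N, N) \cong Ext^2_R(N,N) = 0$ by dimension shifting in the contravariant variable, and then splits \eqref{seq_B} directly. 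This is more elementary --- no tensor--Hom diagram, no Auslander--Goldman --- and, as you correctly observe, requires only the vanishing of $Ext^2_R(N,N)$, not of any $Ext^i_R(N,R)$. So your remark that the $Ext^i_R(N,R)=0$ hypothesis is unnecessary is a genuine strengthening relative to the paper's own argument. One small cosmetic point: once you show $N$ is free, two-periodicity forces $N = 0$ (a nonzero free module has $\Omega^2_R N = 0 \not\cong N$), so your ``all other cases'' clause at the end is in fact vacuous --- but this does not affect the correctness of the conclusion $pd_R M \leq s$.
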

	
	\begin{proof} Let ${\bf F}: \ \ \rar F_1 \rar F_0 \rar N \rar 0$ be a minimal free resolution of $N$ satisfying exact sequences \eqref{seq_A} and \eqref{seq_B}. We have the following diagram
		\begin{equation*} 
			\begin{gathered}
				\xymatrix@C-=1.2cm@R-=2.2cm{\ar[r] & \Omega_R^1 N \otimes_R N^*\ar[r] \ar[d]^{\delta}& F_0 \otimes_R N^* \ar[d] \ar[r]& N \otimes_R N^*\ar[r] \ar[d]^{\gamma} & 0 \\ 0 \ar[r] & Hom_R(N,\Omega_R^1 N) \ar[r] & Hom_R(N, F_0) \ar[r]& Hom_R(N,N) \ar[r]  & }
			\end{gathered} 
		\end{equation*} 
		Clearly, if $Ext^1_R(N,\Omega^1_R N)=0$, $\gamma$ is surjective. Thus $N$ is free as follows from \cite[Theorem A.1]{AG}.
		
		Now apply $Hom_R(N,\_ )$ to the exact sequence $0 \to N \to F_1 \to \Omega^1_R N \to 0$. Since $Ext^{1}_R (N,R)=Ext^{2}_R (N,R)=0$, we get $Ext^1_R(N,\Omega^1_R N) \cong Ext^2_R(N,N)$. The result now follows from the assumption on $N$. 
	\end{proof}

\begin{remark}
If $M$ is two-periodic, $s=0$. Then $M$ is necessarily free. This asserts the Auslander-Reiten conjecture.
\end{remark}	
	
	\bibliographystyle{abbrv}
	\bibliography{ref}
\end{document}